\theoremstyle{plain}
\newtheorem{thm}{Theorem}[section]
\newtheorem{theorem}[thm]{Theorem}
\newtheorem{corollary}[thm]{Corollary}
\newtheorem{lemma}[thm]{Lemma}
\newtheorem{proposition}[thm]{Proposition}
\theoremstyle{definition}
\newtheorem{de}[thm]{Definition}
\newtheorem{example}[thm]{Example}
\newcommand{\Dis}{\rm Dis}
\newcommand{\Z}{\mathbb{Z}}
\newcommand{\N}{\mathbb{N}}
\newcommand{\Aff}{\mathrm{Aff}}
\newcommand{\aut}[1]{\mathrm{Aut}(#1)}
\newcommand{\dis}[1]{\mathrm{Dis}(#1)}
\newcommand{\lmlt}[1]{\mathrm{LMlt}(#1)}
\newcommand{\orb}[2]{\mathrm{Orb}_{#1}(#2)}
\numberwithin{equation}{section}
\begin{document}

\title{Free medial quandles}

\author{P\v remysl Jedli\v cka}
\author{Agata Pilitowska}
\author{Anna Zamojska-Dzienio}

\address{(P.J.) Department of Mathematics, Faculty of Engineering, Czech University of Life Sciences, Kam\'yck\'a 129, 16521 Praha 6, Czech Republic}
\address{(A.P., A.Z.) Faculty of Mathematics and Information Science, Warsaw University of Technology, Koszykowa 75, 00-662 Warsaw, Poland}

\email{(P.J.) jedlickap@tf.czu.cz}
\email{(A.P.) apili@mini.pw.edu.pl}
\email{(A.Z.) A.Zamojska-Dzienio@mini.pw.edu.pl}

\keywords{Quandles, medial quandles, binary modes, free algebras}
\subjclass[2010]{
Primary: 20N02, 08B20}
\date{\today}

\begin{abstract}
This paper brings the construction of free medial quandles
as well as free $n$-symmetric medial
quandles and free $m$-reductive medial quandles.
\end{abstract}

\maketitle

\section{Introduction}\label{sec1}
A binary algebra $(Q,\cdot)$ is called a \emph{rack} if the
following conditions hold, for every $x,y,z\in Q$:
\begin{itemize}
\item $x(yz)=(xy)(xz)$ (we say $Q$ is \emph{left distributive}),
\item the equation $xu=y$ has a unique solution $u\in Q$ (we say $Q$ is a \emph{left quasigroup}).
\end{itemize}
An \emph{idempotent} rack is called a \emph{quandle} (we say $Q$ is idempotent if $xx=x$ for every $x\in Q$). A quandle $Q$ is \emph{medial} if, for every $x,y,u,v\in Q$, $$(xy)(uv)=(xu)(yv).$$

An important example of a medial quandle is an abelian group~$A$ with an
operation $\ast$ defined by
$a\ast b=(1-h)(a)+h(b)$,
where $h$ is an automorphism of~$A$.
This construction is called an {\em affine} quandle (or sometimes an {\em Alexander quandle})
and denoted by $\Aff(A,h)$.
In the literature \cite{Hou1,Hou2},
the group~$A$ is usually considered to be a
$\Z[t,t^{-1}]$-module, where~$t\cdot a=h(a)$, for each~$a\in A$.
We shall adopt this point of view here as well and we usually write $\Aff(A,r)$ instead, where $r$ is a ring element.

Note that in universal algebra terminology an algebra is said to be \emph{affine} if it is polynomially equivalent to a module. A subreduct of an affine algebra is called a \emph{quasi-affine algebra}, see e.g. \cite{KK}. Clearly, affine quandles are quasi-affine algebras.

Medial quandles lie in the intersection of the class of quandles and the class of \emph{modes} \cite{RS}. Recent development in quandle theory is motivated by knot theory
(see e.g. \cite{Car,EN}). The \emph{knot quandle} is a very powerful
knot invariant. Quandles have also applications in differential geometry \cite{Loo} and graph
theory \cite{DK}. Modes are generally idempotent and entropic algebras (algebras with a commutative clone of term operations). Mediality is {{an} other name for entropicity in the binary case.
For a more detailed history of medial quandles we refer to~\cite{JPSZ}.

Medial quandles do not form a variety of binary algebras, unless we
introduce an additional binary operation $\backslash$
and the identities
$$ x\backslash (x\ast y) \approx y \qquad\text{ and }\qquad x\ast (x\backslash y)\approx y$$
to define the left quasigroup property equationally.

The structure of free medial quandles remained open for a long time.
There were only results about more general classes, i.e.
general free modes were investigated by Stronkowski~\cite{Str}
and general free quandles by
Joyce~\cite{J82} and Stanovsk\'y~\cite{St}. Just recently in \cite{FGG} free racks and quandles were described but the characterization does not give any useful information about their structure.

There were also some special cases described, like \emph{involutory} medial quandles,
that means medial quandles satisfying  additionally $x*(x*y)\approx y$.
\begin{theorem}[{\cite[Theorem 10.5]{J82}}]\label{Th:Joyce}
 Let $n\in\N$.
 Denote by $F$ the subset of the quandle $\Aff(\Z^n,-1)$ consisting of those $n$-tuples where
 at most one coordinate is odd.
 Then $(F,\ast)$ is a free $(n+1)$-generated involutory medial quandle
 over $\{(0,\ldots,0), (1,0,\ldots,0), (0,1,0,\ldots,0), \ldots,(0,\ldots,0,1)\}$.
\end{theorem}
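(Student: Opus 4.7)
The plan is to verify three things: $(F,\ast)$ is an involutory medial subquandle of $\Aff(\Z^n,-1)$, the designated $n{+}1$ elements generate it, and any map from those elements into another involutory medial quandle $Q$ extends to a homomorphism. The first two steps are essentially direct, and the third is where the real content lies.

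For the subquandle step, note that in $\Aff(\Z^n,-1)$ the operation is $a\ast b=2a-b$, so the parity pattern of $a\ast b$ coincides with that of $-b$, hence of $b$; thus $F$ is closed under $\ast$, and idempotency, left distributivity, mediality, and $x\ast(x\ast y)=y$ are inherited from the affine quandle. For generation, a direct computation gives $L_{e_i}L_{e_j}(x)=x+2(e_i-e_j)$, and the vectors $2(e_i-e_j)$ generate $2\Z^n$ additively. Starting from each generator $e_k$ (with $e_0=(0,\ldots,0)$) and applying compositions of such $L_{e_i}L_{e_j}$ therefore reaches all of $e_k+2\Z^n$; the disjoint union of these cosets is precisely $F$.

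The main task is the universal property. Given an involutory medial quandle $Q$ and a map $h$ with $q_i:=h(e_i)$, set $\tau_i=L_{q_i}L_{q_0}\in\dis{Q}$ for $1\le i\le n$ and $\tau_0=\mathrm{id}$. Mediality makes $\dis{Q}$ abelian, so the $\tau_i$ commute. Each $v\in F$ admits a unique decomposition $v=e_{k(v)}+2u(v)$ with $k(v)\in\{0,\ldots,n\}$ and $u(v)\in\Z^n$, and I would set
$$\bar h(v)=\tau_1^{u_1(v)}\cdots\tau_n^{u_n(v)}\bigl(q_{k(v)}\bigr).$$
This visibly extends $h$, and uniqueness of any such extension is forced by the generation step. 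The homomorphism identity $\bar h(a\ast b)=\bar h(a)\ast\bar h(b)$ reduces to an algebraic manipulation in $\lmlt{Q}$ using two identities: the collapse $\tau_k\tau_l^{-1}=L_{q_k}L_{q_l}$, which follows from $L_{q_0}^2=\mathrm{id}$ (provided by the involutory law), and the conjugation rule $L_{\sigma(x)}=\sigma L_x\sigma^{-1}$ for $\sigma\in\dis{Q}$, combined with commutativity of $\dis{Q}$.

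The principal obstacle is exactly this last verification. Because $a\ast b=2a-b$ in $F$ produces the awkward decomposition $k(a\ast b)=k(b)$ with $u(a\ast b)=e_{k(a)}-e_{k(b)}+2u(a)-u(b)$, one has to track the interaction between ``base indices'' $k$ and ``offsets'' $u$ through the non-abelian ambient group $\lmlt{Q}$. What makes the calculation close is that mediality (furnishing the abelian $\dis{Q}$ so that the $\tau_i$ commute) and involutoriness (furnishing $L_c^{-1}=L_c$ so that the $\tau_i^{-1}$ can be rewritten symmetrically) are exactly the two axioms of the ambient variety, so no further structural assumption on $Q$ is needed.
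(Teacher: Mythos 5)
Your route is genuinely different from the paper's: the paper never proves Theorem~\ref{Th:Joyce} directly, but cites Joyce and later recovers it as the case $n=2$ of Theorem~\ref{th:alter2} (via $\Z[t]/(1+t)\cong\Z$, $t\mapsto-1$), i.e.\ as a corollary of the general machinery identifying $\dis{F}$ with a free module over a quotient of $\Z[t,t^{-1}]$. Your direct, self-contained verification of the universal property is more elementary and buys independence from that machinery; the paper's route buys the theorem for free once the general construction is in place. Your first two steps (closure of $F$ under $\ast$, and generation via $L_{e_i}L_{e_j}(x)=x+2(e_i-e_j)$ together with $F=\bigcup_k(e_k+2\Z^n)$) are correct and complete, as is the well-definedness of $\bar h$ from the unique decomposition $v=e_{k(v)}+2u(v)$.

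However, you stop exactly at the step that carries the content: you reduce the homomorphism identity to ``an algebraic manipulation'' and then declare it the principal obstacle without performing it. The manipulation does close, but it requires one identity you never isolate: for every $\alpha\in\dis{Q}$ and every $c\in Q$, $L_c\alpha L_c^{-1}=\alpha^{-1}$. (Proof: by mediality the conjugate $\alpha^{L_c}$ does not depend on $c$; for a generator $\alpha=L_aL_b^{-1}$ take $c=a$ and use $L_a^2=\mathrm{id}$ to get $L_a(L_aL_b^{-1})L_a^{-1}=L_b^{-1}L_a^{-1}=\alpha^{-1}$; extend to all of $\dis{Q}$ since conjugation is an automorphism and $\dis{Q}$ is abelian.) With this lemma, writing $\tau^{m}=\tau_1^{m_1}\cdots\tau_n^{m_n}$ and taking $a=e_k+2u$, $b=e_l+2w$, one gets
$$\bar h(a)\ast\bar h(b)=L_{\tau^{u}(q_k)}\tau^{w}(q_l)=\tau^{u}L_{q_k}\tau^{w-u}(q_l)=\tau^{u}\tau^{u-w}L_{q_k}(q_l)=\tau^{2u-w}\tau_k\tau_l^{-1}(q_l)=\tau^{e_k-e_l+2u-w}(q_l),$$
which equals $\bar h(a\ast b)$ by your formula $u(a\ast b)=e_k-e_l+2u-w$. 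Note that it is precisely the inversion under conjugation that produces the coefficient $2$ on $u(a)$; the two facts you do list ($\tau_k\tau_l^{-1}=L_{q_k}L_{q_l}$ and $L_{\sigma(x)}=\sigma L_x\sigma^{-1}$ plus commutativity of $\dis{Q}$) are not by themselves enough as stated. So: right approach, hypotheses used in the right places, but the decisive computation and the lemma $\alpha^{L_c}=\alpha^{-1}$ must actually appear for this to be a proof.
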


Here we generalize the result of Joyce but not directly.
We choose the path started in~\cite{JPSZ} instead
and we study certain permutation groups acting on quandles, called {\em displacement} groups.
It turns out that, in the case of free medial quandles, these groups are free $\Z[t,t^{-1}]$-modules
and we can construct the free medial quandles based on these modules.
Another important result is that the free medial quandles
embed into affine quandles. This shows that the variety of medial quandles
is generated by affine quandles.

Next we focus on two special classes: $n$-symmetric and
$m$-reductive medial quandles {which play a significant role within the class of finite medial quandles}.
A quandle $(Q,\ast)$ is $n$-\emph{symmetric}, if it satisfies
the identity
$$\underbrace{x\ast(x\ast\cdots \ast(x}_{n-times}\ast y)\cdots )\approx y.$$
We construct here free $n$-symmetric medial quandles and
we prove that free $n$-symmetric
quandles embed into products of affine quandles over modules over Dedekind domains.
This is useful especially when studying finite medial quandles
since each finite left quasigroup is $n$-symmetric, for some natural number $n$.

A quandle $(Q,\ast)$ is $m$-\emph{reductive}, if it satisfies the identity
$$(\cdots (x\ast \underbrace{y)\ast\cdots\ast y)\ast y}_{m-times}\approx y.$$
A quandle is called reductive if it is $m$-reductive, for some $m\in\N$.
Reductivity turns out to be a very important notion in the study of medial quandles as
each finite medial quandle embeds into a product of a reductive quandle and a quasigroup \cite{JPZ}.

The paper contents four Sections. In Section 2 we recall and present some facts about general medial quandles. Section 3 contains {{the} main results. Theorem \ref{th:main} gives a description of the free medial quandles and Theorem \ref{th:alter} a construction of affine quandles into which the free quandles embed. Section 4 is devoted to $n$-symmetric and $m$-reductive free medial quandles.
In both cases, the displacement group of the free algebra turns out to be a free $\Z[t]/(f)$-module,
for a suitable polynomial~$f$. The description of the free quandles in these varieties
is analogous as for general medial quandles.

Note that when studying left quasigroups, important tools are the mappings $L_e:x\mapsto e\ast x$,
called the left translations. We use also the right translations $R_e:x\mapsto x\ast e$.
The idempotency and the mediality imply that both $L_e$ and $R_e$ are endomorphisms.
The left quasigroup property means that $L_e$ is an automorphism.

\section{Preliminaries}

This section recalls some important notions from~\cite{JPSZ} where
the structure of medial quandles was described. Key ingredients are two permutation
groups acting on quandles.

\begin{de}
 Let~$Q$ be a quandle. The {\em left multiplication group} of~$Q$ is the group
 $$\lmlt{Q}=\left< L_x;\ x\in Q\right>.$$
 The {\em displacement group} is the group
 $$\dis{Q}=\left<L_xL_y^{-1};\ x,y\in Q\right>.$$
\end{de}

It was proved in~\cite[Proposition 2.1]{HSV} that the actions of both groups on~$Q$ have the same orbits.
We use, in the sequel, the word {\em orbit} plainly without explicitly
mentioning the acting groups. The orbit of~$Q$ containing~$x$ is denoted by ~$Qx$ and the stabilizer subgroup of $x$ is denoted by $\dis{Q}_x$. For two permutations $\alpha,\beta$, we write $\alpha^{\beta}=\beta\alpha\beta^{-1}$. The commutator is defined by $[\beta,\alpha]=\alpha^\beta\alpha^{-1}$.
The identity permutation is denoted by~$1$.

It is also useful to understand the structure of the displacement group.

\begin{lemma}[{\cite[Proposition 2.1]{HSV}}]\label{lem:dis_struct}
 $\dis{Q}=\{L_{x_1}^{\varepsilon_1}L_{x_2}^{\varepsilon_2}\cdots L_{x_k}^{\varepsilon_k};\
 x_i\in Q, \varepsilon_i=\pm1, \sum \varepsilon_i=0\}$.
\end{lemma}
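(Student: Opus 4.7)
My plan is as follows. Let $S$ denote the right-hand set. First I would verify that $S$ is a subgroup of $\lmlt{Q}$: concatenating two such words preserves the condition $\sum\varepsilon_i=0$, reversing a word and negating every sign produces an inverse with sum still $0$, and the empty product represents the identity. Since each generator $L_xL_y^{-1}$ of $\dis{Q}$ already lies in $S$, this immediately gives $\dis{Q}\subseteq S$.

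The nontrivial direction is $S\subseteq\dis{Q}$, which I would prove by induction on the length $k$. Two preliminary ingredients will be needed. First, left distributivity applied to $a\cdot((a\backslash b)\cdot x)=(a\cdot(a\backslash b))\cdot(a\cdot x)=b\cdot(a\cdot x)$ yields $L_aL_{a\backslash b}=L_bL_a$, and consequently
$$L_a^{-1}L_b=L_{a\backslash b}L_a^{-1}\in\dis{Q},$$
so even a ``negative--positive'' pair $L_a^{-1}L_b$ belongs to $\dis{Q}$. Second, conjugation by $L_c$ sends the generator $L_aL_b^{-1}$ to $L_{ca}L_{cb}^{-1}$, showing that $\dis{Q}$ is normal in $\lmlt{Q}$.

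For the inductive step, take $w=L_{x_1}^{\varepsilon_1}\cdots L_{x_k}^{\varepsilon_k}$ with $\sum\varepsilon_i=0$ and $k\geq 2$. Since the signs cannot all agree, some adjacent pair satisfies $\varepsilon_i\neq\varepsilon_{i+1}$. Factor $w=A\cdot g\cdot C$ with $g=L_{x_i}^{\varepsilon_i}L_{x_{i+1}}^{\varepsilon_{i+1}}$. The first observation gives $g\in\dis{Q}$; the remaining word $AC$ has length $k-2$ and exponent sum $0$, so $AC\in\dis{Q}$ by the induction hypothesis; and then $w=(AgA^{-1})\cdot AC$ lies in $\dis{Q}$ by normality.

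The step I expect to be the main obstacle is handling the adjacent pair when $(\varepsilon_i,\varepsilon_{i+1})=(-1,+1)$: here $L_{x_i}^{-1}L_{x_{i+1}}$ is not literally of the form $L_aL_b^{-1}$ and must be rewritten via $L_a^{-1}L_b=L_{a\backslash b}L_a^{-1}$. This is the one place where the quandle structure (left distributivity together with the left quasigroup property) is genuinely used; a purely combinatorial parity argument in the free group on $\{L_x\}$ would fail, since elements like $L_a^{-1}L_b$ do not lie in the subgroup generated by $\{L_xL_y^{-1}\}$ of that free group.
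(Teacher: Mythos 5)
Your argument is correct and complete. Note that the paper does not prove this lemma at all --- it is imported verbatim from \cite[Proposition 2.1]{HSV} --- so there is no in-paper proof to compare against; your write-up is the standard self-contained argument one would give. The two pivotal points are both handled properly: the identity $L_aL_{a\backslash b}=L_bL_a$ (hence $L_a^{-1}L_b=L_{a\backslash b}L_a^{-1}\in\dis{Q}$) follows exactly as you derive it from left distributivity applied to $a\ast((a\backslash b)\ast z)$ together with $a\ast(a\backslash b)=b$, and this is indeed where the quandle axioms are genuinely used, as your closing remark about the free group correctly observes. One small point worth making explicit in the normality step: $L_c(L_aL_b^{-1})L_c^{-1}=L_{c\ast a}L_{c\ast b}^{-1}$ only gives $L_c\,\dis{Q}\,L_c^{-1}\subseteq\dis{Q}$ a priori, but since $a\mapsto c\ast a$ is a bijection of $Q$, conjugation by $L_c$ permutes the generating set, so equality holds and conjugation by $L_c^{-1}$ also preserves $\dis{Q}$; with that observed, your decomposition $w=(AgA^{-1})\cdot AC$ closes the induction.
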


From this lemma, we can clearly see that $\dis{Q}$ is a normal subgroup of~$\lmlt{Q}$.
Moreover, in our context, the group is commutative.

\begin{proposition}[{\cite[Proposition 2.4]{HSV}}]
 Let~$Q$ be a quandle. Then $Q$ is medial if and only if~$\dis{Q}$ is abelian.
\end{proposition}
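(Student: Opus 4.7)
The plan is to translate mediality into a commutativity statement inside $\dis{Q}$ via the conjugation identity $L_{x\ast y}=L_xL_yL_x^{-1}$, which is immediate from left distributivity: rewriting $x\ast(y\ast z)=(x\ast y)\ast(x\ast z)$ in terms of left translations gives $L_xL_y=L_{x\ast y}L_x$, and multiplying on the right by $L_x^{-1}$ yields the identity.

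Next, I would unpack mediality. Reading $(x\ast y)\ast(u\ast v)=(x\ast u)\ast(y\ast v)$ as the equality of translations $L_{x\ast y}L_u=L_{x\ast u}L_y$ (valid for every $v$), substituting the conjugation identity for both $L_{x\ast y}$ and $L_{x\ast u}$, cancelling $L_x$ on the left and multiplying by $L_x^{-1}$ on the right, mediality turns into the equivalent group identity
\[
(L_yL_x^{-1})(L_uL_x^{-1})=(L_uL_x^{-1})(L_yL_x^{-1})\qquad\text{for all }x,y,u\in Q.
\]

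If $\dis{Q}$ is abelian this is immediate, so running the equivalent transformations backwards shows $Q$ is medial. Conversely, assuming $Q$ is medial (and non-empty, otherwise the statement is vacuous), fix any $x_0\in Q$; the identity above says that all elements of $S=\{L_yL_{x_0}^{-1}:y\in Q\}$ pairwise commute. Since each standard generator $L_aL_b^{-1}$ of $\dis{Q}$ factors as $(L_aL_{x_0}^{-1})(L_bL_{x_0}^{-1})^{-1}$, the set $S\cup S^{-1}$ generates $\dis{Q}$, and a group generated by pairwise commuting elements is abelian. The only care-requiring point is this reduction to a single base-point $x_0$: without it, mediality only delivers pairwise commutativity among generators sharing a common $L_x^{-1}$, which a priori is weaker than commutativity of the whole displacement group.
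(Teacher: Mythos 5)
Your argument is correct: the conjugation identity $L_{x\ast y}=L_xL_yL_x^{-1}$, the rewriting of mediality as $L_{x\ast y}L_u=L_{x\ast u}L_y$, and the resulting commutation of $\{L_yL_{x_0}^{-1}:y\in Q\}$ for a fixed base point $x_0$ (which generates $\dis{Q}$ since $L_aL_b^{-1}=(L_aL_{x_0}^{-1})(L_bL_{x_0}^{-1})^{-1}$) together give the equivalence. The paper itself does not prove this statement but cites it from \cite{HSV}; your proof is essentially the standard argument given there, and you correctly flag and resolve the one subtle point, namely the reduction to a common base point.
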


Since $\dis{Q}$ is abelian, conjugations by elements from the same coset of~$\dis{Q}$ yield
the same results.

\begin{lemma}\label{lem:conjug}
 Let~$Q$ be a medial quandle. Let~$\alpha\in\dis{Q}$ and $x,y\in Q$. Then $\alpha^{L_x}=\alpha^{L_y}$.
\end{lemma}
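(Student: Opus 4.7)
The plan is to reduce the claim to the statement that $L_y^{-1}L_x$ commutes with $\alpha$, and then to observe that this element itself lies in $\dis{Q}$, which is abelian by the preceding proposition.

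More precisely, unfolding the conjugation convention $\alpha^\beta = \beta\alpha\beta^{-1}$, the equality $\alpha^{L_x}=\alpha^{L_y}$ is equivalent to
\[
L_y^{-1}L_x\,\alpha\,(L_y^{-1}L_x)^{-1}=\alpha,
\]
i.e., to $L_y^{-1}L_x$ centralizing $\alpha$. So the first step is this rewriting.

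The second step is to show $L_y^{-1}L_x\in\dis{Q}$. This is immediate from Lemma~\ref{lem:dis_struct}: the product $L_y^{-1}L_x$ is a word in left translations whose exponents $-1$ and $+1$ sum to $0$. (Equivalently, one can note that $L_xL_y^{-1}$ is a generator of $\dis{Q}$ and conjugate it by $L_y$, using normality of $\dis{Q}$ in $\lmlt{Q}$.)

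The third step finishes the argument: since $\dis{Q}$ is abelian by the medial assumption, any two of its elements commute; in particular $\alpha$ and $L_y^{-1}L_x$ commute, giving the desired centralizing identity. Reading the computation backwards yields $\alpha^{L_x}=\alpha^{L_y}$.

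There is really no obstacle here; the statement is a one-line consequence of the abelianness of $\dis{Q}$ together with the parity characterization in Lemma~\ref{lem:dis_struct}. The only subtlety to be careful about is not confusing $L_y^{-1}L_x$ with $L_xL_y^{-1}$, but both lie in $\dis{Q}$ by the same exponent-sum reasoning, so the argument is robust.
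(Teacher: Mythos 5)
Your proof is correct and is essentially the paper's argument made explicit: the paper's one-line computation inserts $L_yL_y^{-1}$ and slides $\alpha$ past $L_x^{-1}L_y$, which is exactly your observation that $L_y^{-1}L_x$ (equivalently $L_x^{-1}L_y$) lies in the abelian group $\dis{Q}$ by the zero-exponent-sum characterization and therefore centralizes $\alpha$. No issues.
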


\begin{proof}
 $\alpha^{L_x}=L_x\alpha L_x^{-1}=L_x\alpha L_x^{-1}L_yL_y^{-1}=L_xL_x^{-1}L_y\alpha L_y^{-1}=\alpha^{L_y}$
 due to the abelianess of~$\dis{Q}$.
\end{proof}

From now on, by writing $\alpha^L$, we mean $\alpha^{L_x}$, for an arbitrary $x\in Q$, since
the conjugation does not depend on the element~$x$.

It is easy to see that, for $\alpha\in\aut{Q}$ and $x\in Q$, $L_{\alpha(x)}=L_x^\alpha$.
In particular, for $\alpha=L_y$, we obtain ${L_{y\ast x}}=L_x^{L_y}$.
This implies that $\lmlt{Q}$ has only few generators. On the other hand, $\Dis{Q}$, in spite of being
a subgroup of~$\lmlt{Q}$, has usually more generators than $\lmlt{Q}$.

\begin{proposition}\label{prop:generators}
 Let~$Q$ be a medial quandle generated by~$X\subset Q$ and choose $z\in X$. Then
 \begin{itemize}
  \item the group $\lmlt{Q}$ is generated by $\{L_x;\ x\in X\}$;
  \item the group $\dis{Q}$ is generated by $\{ (L_xL_z^{-1})^{L^k};\ x\in X,\ k\in\Z\}$.
 \end{itemize}
\end{proposition}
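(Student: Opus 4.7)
The plan for both parts is an induction over how elements of $Q$ are generated from $X$ under the quandle operations~$\ast$ and~$\backslash$. The workhorse is the identity $L_{\alpha(y)}=L_y^\alpha$ for every $\alpha\in\aut{Q}$; specialised to $\alpha=L_x$ and $\alpha=L_x^{-1}$, it yields
\[
 L_{y\ast w}=L_yL_wL_y^{-1}\qquad\text{and}\qquad L_{y\backslash w}=L_y^{-1}L_wL_y.
\]
For the first bullet, let $G=\langle L_x;\ x\in X\rangle$. Trivially $L_x\in G$ for $x\in X$, and the two displayed formulas show that the set $\{a\in Q;\ L_a\in G\}$ is closed under both $\ast$ and $\backslash$; since it contains $X$, it equals $Q$, so $\lmlt{Q}=G$.

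For the second bullet, write $\delta_a:=L_aL_z^{-1}\in\dis{Q}$ and let $D$ be the subgroup of $\dis{Q}$ generated by $\{\delta_x^{L^k};\ x\in X,\ k\in\Z\}$ (the conjugation $L$ is well defined on $\dis{Q}$ by Lemma~\ref{lem:conjug}). Because $\dis{Q}$ is abelian, $D$ is precisely the $\Z[L,L^{-1}]$-submodule of $\dis{Q}$ spanned by $\{\delta_x;\ x\in X\}$. I would first reduce to showing $\delta_a\in D$ for every $a\in Q$: by Lemma~\ref{lem:dis_struct} an element of $\dis{Q}$ has the shape $L_{x_1}^{\varepsilon_1}\cdots L_{x_k}^{\varepsilon_k}$ with $\sum\varepsilon_i=0$, and substituting $L_{x_i}=\delta_{x_i}L_z$ (resp. $L_{x_i}^{-1}=L_z^{-1}\delta_{x_i}^{-1}$) and pushing all $L_z^{\pm1}$ to the right using $L_z^\varepsilon\delta=\delta^{L^\varepsilon}L_z^\varepsilon$ converts the product into a product of conjugates of $\delta_{x_i}^{\pm1}$ by powers of~$L$ (the trailing $L_z^{\sum\varepsilon_i}$ is trivial).

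It therefore suffices to show $\delta_a\in D$ for every $a\in Q$ by induction on the generation of $Q$ from $X$. The base case $a=x\in X$ is immediate. For the inductive step the essential computation gives, after inserting $L_z^{-1}L_z=1$ in the right places,
\[
 \delta_{y\ast w}=L_yL_wL_y^{-1}L_z^{-1}=\delta_y\cdot\delta_w^{L}\cdot(\delta_y^{L})^{-1},
\]
which in additive notation on the abelian group $\dis{Q}$ reads $\delta_{y\ast w}=\delta_y+\delta_w^L-\delta_y^L$. A symmetric calculation (or, more quickly, solving the previous relation for $\delta_u$ with $u=y\backslash w$ and $w=y\ast u$) yields $\delta_{y\backslash w}=\delta_y+\delta_w^{L^{-1}}-\delta_y^{L^{-1}}$. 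Since $D$ is closed under the $\Z[L,L^{-1}]$-action, both identities give $\delta_{y\ast w},\delta_{y\backslash w}\in D$ whenever $\delta_y,\delta_w\in D$, completing the induction.

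The one delicate point --- and the main obstacle --- is the bookkeeping around the two displayed identities: the products on the right-hand sides live in the non-abelian group $\lmlt{Q}$, so I have to check them there, but their interpretation as $\Z[L,L^{-1}]$-linear combinations is only meaningful in the abelian quotient $\dis{Q}$, which is where Lemma~\ref{lem:conjug} is crucial to make $L$ an unambiguous operator. Once this is set up, the rest is a routine verification.
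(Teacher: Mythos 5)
Your proof is correct, and for the second bullet it takes a genuinely different route from the paper's. The paper first uses the first bullet to rewrite any $\alpha\in\dis{Q}$ as a balanced word $L_{x_1}^{\varepsilon_1}\cdots L_{x_n}^{\varepsilon_n}$ with all $x_i\in X$, and then inducts on the length $n$ of that word, peeling off the outermost letters via $L_{x_1}w'L_{x_n}^{-1}=(w')^L(L_{x_1}L_z^{-1})(L_{x_n}L_z^{-1})^{-1}$ and, when $\varepsilon_1=\varepsilon_n$, splitting the word at a proper prefix of exponent sum zero. You instead let the $x_i$ range over all of $Q$, turn the balanced word into a $\Z[L,L^{-1}]$-combination of the elements $\delta_a=L_aL_z^{-1}$ by a normal-form computation (pushing all $L_z^{\pm1}$ to the right), and then prove the stronger intermediate claim that $\delta_a\in D$ for \emph{every} $a\in Q$ by induction on the generation of $a$ from $X$, using the identities $\delta_{y\ast w}=\delta_y+\delta_w^L-\delta_y^L$ and $\delta_{y\backslash w}=\delta_y+\delta_w^{L^{-1}}-\delta_y^{L^{-1}}$; I checked that both hold in $\lmlt{Q}$ after inserting $L_z^{-1}L_z$ in the right places. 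Your version trades the word-length induction for a quandle-generation induction: it isolates a clean module-theoretic statement (the map $a\mapsto\delta_a$ behaves like a cocycle, and $\dis{Q}$ is the module it spans) and avoids the prefix-splitting case analysis, at the cost of a slightly longer setup; both arguments rest on the same two pillars, Lemma~\ref{lem:dis_struct} and Lemma~\ref{lem:conjug}. One small point to make explicit in a final write-up: $\alpha\mapsto\alpha^{L^{-1}}$ is likewise independent of the base point, being the inverse of the automorphism from Lemma~\ref{lem:conjug}; you use this tacitly in the identity for $\delta_{y\backslash w}$.
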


\begin{proof}
 The generating set for $\lmlt{Q}$ is obtained by the induction using ${L_{x\ast y}}=L_xL_yL_x^{-1}$
 and $L_{x\backslash y}=L_x^{-1}L_yL_x$.

 Suppose now $\alpha\in\dis{Q}$. By Lemma~\ref{lem:dis_struct} and the previous observation
 we can suppose $\alpha=L_{x_1}^{\varepsilon_1}L_{x_2}^{\varepsilon_2}\cdots L_{x_n}^{\varepsilon_n}$,
 where $x_i\in X$ and $\varepsilon_i=\pm 1$ with $\sum \varepsilon_i=0$, for $1\leq i\leq n$.
 We prove the claim by an induction on~$n$. For $n=2$ the claim is true.

 Let the induction hypothesis holds for all words of length at most $n-2$.
 If $\varepsilon_1=\varepsilon_n$ then clearly $w=w_1w_2$ with $w_i\in\dis{Q}$ and we use the
 induction hypothesis. Let now $\varepsilon_1=1$ and $\varepsilon_n=-1$.
 Then $w=L_{x_1}w'L_{x_n}^{-1}$ and $w'$ is, by the induction hypothesis, a product of elements from
 $\{ (L_xL_z^{-1})^{L^k};\ x\in X,\ k\in\Z\}$. But
 $$w=L_{x_1}w'L_{x_1}^{-1}L_{x_1}L_{z}^{-1}L_z L_{x_n}^{-1}={(w')^L  (L_{x_1}L_{z}^{-1}) (L_{x_n}L_z^{-1})^{-1}}$$
 proving the claim. The argument is similar for $\varepsilon_1=-1$ and $\varepsilon_n=1$.
\end{proof}

This result cannot be much improved -- it is shown in Proposition~\ref{prop:free} that
the displacement group of a free medial quandle is not finitely generated.

The abelian group $\dis{Q}$ can be easily endowed with the structure of a $\Z[t,t^{-1}]$-module,
it suffices to pick an automorphism of~$\dis{Q}$. A natural choice is the inner automorphism
$\alpha\mapsto\alpha^L$. Hence, from now on, the group $\dis{Q}$ is treated, depending
on the situation, either
 as a permutation group acting on~$Q$ or
as an $R$-module, where $R$ is a suitable image of~$\Z[t,t^{-1}]$,
with the action of $t$ defined by $\alpha^t=\alpha^{L}$.
Note that, for $f\in \Z[t,t^{-1}]$, $\alpha^{f} =\alpha ^{f(L)}$.
\begin{example}
Let $f=(1-t)^2$. Then $\alpha^{f}=\alpha^{1-2t+t^2}=\alpha^{f(L)}=
\alpha^{1-2L+L^2}=\alpha(\alpha^{L})^{-2}\alpha^{L^2}$.
\end{example}

It was proved in \cite[Proposition 3.2]{JPSZ} that, for any $x\in Q$, the orbit $Qx$ is affine over $\dis{Q}/\dis{Q}_x$
and we can naturally identify the sets $Qx$ and $\dis{Q}/\dis{Q}_x$ by
defining the group operation on~$Qx$ as:
$$\alpha(x)+\beta(x)=\alpha\beta(x)\qquad \text{ and }\qquad-\alpha(x)=\alpha^{-1}(x).$$
The group so defined is denoted by $\orb{Q}{x}$ and called the {\em orbit group} for $Qx$.
Moreover, $\dis{Q}_x$ is a submodule of $\dis{Q}$: suppose $\alpha(x)=x$; then
${\alpha^t(x)}=L_x\alpha L_x^{-1}(x)=x$.
This means that $\dis{Q}/\dis{Q}_x$ is a $\Z[t,t^{-1}]$-module and we can
call $\orb{Q}{x}$ the {\em orbit module} for~$Qx$.

\section{Free medial quandle}

In this section we present the free medial quandles. Regarding the generating set,
we see that, in any quandle~$Q$, for all $x,y\in Q$, ${y\ast x}\in Qx$ as well as $y\backslash x\in Qx$.
Hence each orbit has to contain at least one generator.

\begin{lemma}\label{lem:orbits}
 Let $Q$ be a quandle generated by~$X\subset Q$. Then the set $X\cap Qx$ is nonempty, for each $x\in Q$.
\end{lemma}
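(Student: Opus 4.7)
The plan is to proceed by induction on the complexity of $x$ as a term built from generators in~$X$ using the quandle operations $\ast$ and $\backslash$. The base case, $x\in X$, is immediate: then $x\in X\cap Qx$ since $x$ always lies in its own orbit.

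For the inductive step, suppose $x=y\ast z$ or $x=y\backslash z$ where $y,z$ are shorter terms (so by the induction hypothesis both $X\cap Qy$ and $X\cap Qz$ are nonempty). The key observation, already foreshadowed in the paragraph preceding the lemma, is that each of $y\ast z$ and $y\backslash z$ lies in the same orbit as~$z$. Indeed, idempotency gives $L_z(z)=z$, hence also $L_z^{-1}(z)=z$, and therefore
\[
 y\ast z = L_y(z) = (L_yL_z^{-1})(z).
\]
Now $L_yL_z^{-1}\in\dis{Q}$ by Lemma~\ref{lem:dis_struct} (the exponents sum to~$0$), so $y\ast z\in\dis{Q}(z)=Qz$. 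The same argument with $L_y^{-1}L_z$ in place of $L_yL_z^{-1}$ shows $y\backslash z\in Qz$ as well. Consequently $Qx=Qz$, and the induction hypothesis delivers an element of $X$ in $Qz=Qx$.

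There is essentially no obstacle here; the lemma is a straightforward structural observation. The only thing worth emphasising is \emph{why} multiplication by $L_y$ moves $z$ inside its own orbit rather than to a different one — the answer being that although $L_y$ itself need not belong to $\dis{Q}$, the combination $L_yL_z^{-1}$ does, and the ``correction'' $L_z^{-1}$ is invisible on the element~$z$ thanks to idempotency. This is also precisely what makes the notion of orbit well-behaved with respect to the generating set.
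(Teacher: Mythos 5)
Your proof is correct and follows exactly the route the paper intends: the paragraph preceding the lemma states the key fact that $y\ast x\in Qx$ and $y\backslash x\in Qx$, and your induction on term complexity, together with the observation that $L_yL_z^{-1}\in\dis{Q}$ fixes nothing but acts as $L_y$ on $z$ by idempotency, is precisely the (omitted) argument. Nothing to add.
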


The following proposition characterizes the free medial quandles. Formally, it is pronounced
as a sufficient condition only but we {can} 
see in Theorem~\ref{th:main} that such an
object exists, making the condition necessary too.

\begin{proposition}\label{prop:free}
 Let $F$ be a medial quandle generated by a set~$X\subset F$. Choose $z\in X$ arbitrarily. Then
 $F$ is free over~$X$ if the following conditions are satisfied:
 \begin{enumerate}
  \item each element of~$X$ lies in a different orbit;
  \item $\dis{F}$ is a free $\Z[t,t^{-1}]$-module with $\{L_xL_z^{-1};\ x\in X\smallsetminus\{z\}\}$
  as a free basis;
  \item the action of $\dis{F}$ on~$F$ is free.
 \end{enumerate}
\end{proposition}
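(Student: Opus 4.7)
The plan is to show that any map $f\colon X\to Q$ into a medial quandle~$Q$ extends uniquely to a quandle homomorphism $\bar f\colon F\to Q$; uniqueness is automatic since $X$ generates~$F$, so only existence requires work. For that, first fix a canonical normal form in~$F$: by Lemma~\ref{lem:orbits} together with assumption~(1), each orbit of~$F$ contains exactly one element of~$X$, hence every $w\in F$ lies in the orbit $Fx$ of a unique $x\in X$; by the free-action assumption~(3) there is then a unique $\alpha\in\dis F$ with $w=\alpha(x)$. Thus $F$ is in bijection with $\dis F\times X$.

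Next, build a $\Z[t,t^{-1}]$-module homomorphism $\phi\colon\dis F\to\dis Q$. Since $Q$ is medial, $\dis Q$ is abelian by Proposition~2.4 and inherits a $\Z[t,t^{-1}]$-module structure via conjugation by any $L_q$. By assumption~(2), it suffices to send each basis element $L_xL_z^{-1}$, $x\in X\smallsetminus\{z\}$, to $L_{f(x)}L_{f(z)}^{-1}$ and extend $\Z[t,t^{-1}]$-linearly; this forces $\phi(L_xL_y^{-1})=L_{f(x)}L_{f(y)}^{-1}$ for all $x,y\in X$. Define
$$\bar f\bigl(\alpha(x)\bigr):=\phi(\alpha)\bigl(f(x)\bigr),\qquad \alpha\in\dis F,\ x\in X;$$
taking $\alpha=1$ gives $\bar f|_X=f$.

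The crucial step is to verify that $\bar f$ preserves~$\ast$. For $u=\beta(x)$ and $v=\alpha(y)$, the identity $L_{\beta(x)}=\beta L_x\beta^{-1}$ together with idempotency ($L_y(y)=y$) yields $u\ast v=\gamma(y)$ with
$$\gamma=\beta L_x\beta^{-1}\alpha L_y^{-1}\in\dis F,$$
which, using abelianness of $\dis F$ and the rule $\delta^{L_x}=t\delta$, reduces to
$$\gamma=(1-t)\beta+t\alpha+L_xL_y^{-1}\qquad\text{(additive module notation).}$$
Carrying out the same manipulations inside~$Q$ gives $\bar f(u)\ast\bar f(v)=\gamma'(f(y))$ with $\gamma'=(1-t)\phi(\beta)+t\phi(\alpha)+L_{f(x)}L_{f(y)}^{-1}$; applying $\phi$ to the formula for $\gamma$ gives $\phi(\gamma)=\gamma'$, whence $\bar f(u\ast v)=\phi(\gamma)(f(y))=\bar f(u)\ast\bar f(v)$.

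The principal obstacle is the derivation of the module-valued formula for~$\gamma$: one must insert $L_x^{-1}L_x$ to pull $\beta L_x\beta^{-1}\alpha L_y^{-1}$ into an expression recognisable in~$\dis F$, reinterpret the resulting conjugations via the $\Z[t,t^{-1}]$-action, and check that the identical sequence of reductions remains valid inside~$\dis Q$, so that $\phi$ transports the product faithfully. Once this calculation is secured the remaining argument is routine module bookkeeping.
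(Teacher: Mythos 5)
Your proposal is correct and follows essentially the same route as the paper: establish the unique normal form $\alpha(x)$ from (1) and (3), define the module homomorphism $\Phi$ on the free basis from (2), set $\Psi(\alpha(x))=\Phi(\alpha)(\psi(x))$, and verify the homomorphism property by inserting $L_x^{-1}L_x$ and rewriting the conjugation as the $t$-action. The computation $\gamma=(1-t)\beta+t\alpha+L_xL_y^{-1}$ is exactly the paper's displayed calculation in additive disguise.
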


\begin{proof}
 Observe first that, for any $y\in F$, there exists exactly one $x\in X$ and exactly one $\alpha\in\dis{F}$
 such that $y=\alpha(x)$. Indeed, the existence of $x$, comes from Lemma~\ref{lem:orbits},
 and its uniqueness from~(1). The uniqueness of $\alpha$ is due to~(3).

 Let $Q$ be a medial quandle and let $Y\subset Q$. Let $\psi$ be a mapping $X\to Y$.
 We prove that $\psi$ can be extended to a homomorphism $\Psi:F\to Q$.
 We define first a $\Z[t,t^{-1}]$-module homomorphism $\Phi:\dis{F} \to \dis{Q}$
 on the basis of~$\dis{F}$ by setting $\Phi(L_xL_z^{-1})=L_{\psi(x)}L_{\psi(z)}^{-1}$.
 Note that $\Phi(\alpha^L)=\Phi(\alpha^t)=\Phi(\alpha)^t=\Phi(\alpha)^L$.
 Now set
 $$\Psi(\alpha(x))=\Phi(\alpha)(\psi(x)),\qquad\text{ for all }\alpha\in\dis{F}\text{ and }x\in X.$$
 Mapping~$\Psi$ is well defined since every element of $F$ has a unique representation by~$\alpha$ and~$x$.
\begin{multline*}
  {\Psi(\alpha(x))\ast\Psi(\beta(y))=\Phi(\alpha)(\psi(x)) \ast \Phi(\beta)(\psi(y))}
  =L_{\Phi(\alpha)(\psi(x))}\Phi(\beta)(\psi(y))=L_{\psi(x)}^{\Phi(\alpha)} \Phi(\beta)(\psi(y))=\\
  \Phi(\alpha)L_{\psi(x)}\Phi(\alpha^{-1})\Phi(\beta)L_{\psi(x)}^{-1}L_{\psi(x)}L_{\psi(y)}^{-1}(\psi(y))
  =\Phi(\alpha)(\Phi(\alpha^{-1}\beta))^L L_{\psi(x)}L_{\psi(y)}^{-1} (\psi(y))=\\
  \Psi(\alpha(\alpha^{-1}\beta)^L L_xL_y^{-1}(y))=\Psi(\alpha L_x \alpha^{-1}\beta (y))
  =\Psi(L_{\alpha(x)}\beta(y))={\Psi(\alpha(x)\ast \beta(y))}
 \end{multline*}
 and $\Psi$ is a homomorphism that extends $\psi$.
\end{proof}

In the sequel, we use the following notation:
let~$X$ be a set. We choose $z\in X$ arbitrarily and
we denote by $X^{-}$ the set $X\smallsetminus\{z\}$.
We often do not specify the element~$z$ since we actually rarely need it explicitly.
Let now $R$ be a ring and consider the free $R$-module of rank $|X^-|$, i.e.
$M=\bigoplus_{x\in X^-} R$. We then choose a free basis of $M$, let us say $\{e_i;\ i\in X^-\}$, and by defining $e_z=0\in M$, we have defined $e_i$ as an element of~$M$, for each~$i\in X$.

\begin{theorem}\label{th:main}
 Let $X$ be a set and let $z\in X$. Denote by $X^-$ the set $X\smallsetminus\{z\}$.
 Let $M=\bigoplus_{x\in X^-}\Z[t,t^{-1}]$.
Let $\{e_i;\ i\in X^-\}$ be a free basis of~$M$. Moreover, let $e_z=0\in M$.
Let us denote by $F$ the set $M\times X$ equipped
 with the operation
 $$(a,i){\ast} (b,j)=((1-t)\cdot a+t\cdot b+e_i-e_j,j).$$
 Then $(F,{\ast})$ is a free
 medial quandle over $\{(0,i);\ i\in X\}$.
\end{theorem}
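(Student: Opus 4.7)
The plan is to verify the three hypotheses of Proposition~\ref{prop:free} after confirming that $F$ is a medial quandle generated by $X' := \{(0,i) : i \in X\}$. Idempotency is immediate, and the left quasigroup property follows because $(a,i) \ast (x,m) = (c,k)$ forces $m=k$ and $x = t^{-1}(c - (1-t)a - e_i + e_k)$ uniquely. Left distributivity and mediality reduce to routine polynomial identities on each coordinate; in particular both sides of the medial identity yield
$$\bigl((1-t)^2 a + (1-t)t\,b + t(1-t)\,c + t^2 d + (1-t)e_i + t e_j + t e_k - (1+t)e_l,\; l\bigr).$$
Since the second coordinate of $(a,i) \ast (b,j)$ equals $j$, every left translation preserves the fibers $F_j := \{(b,j) : b \in M\}$; hence each $F_j$ is a union of orbits and the distinct generators $(0,j)$ lie in pairwise distinct orbits, establishing condition~(1).

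The core computation is to identify $\dis{F}$ with the group of first-coordinate translations. For each $i \in X^-$ a direct calculation gives
$$L_{(0,i)} L_{(0,z)}^{-1}(b,j) = (b + e_i,\, j),$$
so $L_{(0,i)} L_{(0,z)}^{-1} = \tau_{e_i}$ where $\tau_m : (b,j) \mapsto (b+m,j)$. A second short calculation gives $L_{(0,z)}\,\tau_m\,L_{(0,z)}^{-1} = \tau_{tm}$, so the $t$-conjugation action on the abelian group $T := \{\tau_m : m \in M\}$ matches scalar multiplication by $t$ in $M$ under the identification $\tau_m \leftrightarrow m$. Thus $T$ is isomorphic, as a $\Z[t,t^{-1}]$-module, to the free module $M$ on $\{e_i : i \in X^-\}$.

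Proposition~\ref{prop:generators}, applied to the subquandle generated by~$X'$, then shows that $\dis{\langle X'\rangle}$ is generated by $\{(L_{(0,i)} L_{(0,z)}^{-1})^{L^k} : i \in X^-,\, k \in \Z\} = \{\tau_{t^k e_i}\}$, which as a group produces exactly $T$. Since every $(a,i) = \tau_a(0,i)$ lies in the orbit of a generator under these elements, the subquandle $\langle X'\rangle$ is all of $F$, and therefore $\dis{F} = T$. This identification realises $\dis{F}$ as the free $\Z[t,t^{-1}]$-module on $\{L_{(0,i)} L_{(0,z)}^{-1} : i \in X^-\}$, giving condition~(2). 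Because $\tau_m(b,j) = (b,j)$ forces $m=0$, the action of $\dis{F}$ on $F$ is free, yielding~(3). Proposition~\ref{prop:free} then certifies freeness of $F$ over~$X'$.

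The main obstacle is the interlocked bookkeeping between $\dis{F}$ and the generation of $F$: Proposition~\ref{prop:generators} formally computes the displacement group of the subquandle $\langle X'\rangle$, so one has to argue simultaneously that the candidate generators already sweep out all of $F$ via the translation action, and that no non-trivial $\Z[t,t^{-1}]$-relations collapse $\dis{F}$ below the free module~$M$. The latter is exactly ensured by the explicit identification $\tau_m \leftrightarrow m$ with the chosen free basis $\{e_i\}$ of $M$, and the former follows from transitivity of $T$ on each fiber $F_j$. Beyond this, the only tedious step is the expansion verifying mediality and left distributivity, which is mechanical.
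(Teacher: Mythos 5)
Your proposal is correct and follows essentially the same route as the paper: verify the quandle axioms directly, identify $L_{(0,i)}L_{(0,z)}^{-1}$ with translation by $e_i$, show conjugation by $L$ acts as multiplication by $t$, and then invoke Proposition~\ref{prop:free} via Proposition~\ref{prop:generators}. Your packaging of the computation through the translation group $T=\{\tau_m\}$ is only a cosmetic variant of the paper's induction showing $\bigl(L_{(0,i)}L_{(0,z)}^{-1}\bigr)^{L^n}((c,j))=(c+t^n\cdot e_i,j)$.
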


\begin{proof}
The idempotency is evident. The mediality is proved by {the observation that} 
$$((a,i)\ast(b,j))\ast((c,k)\ast(d,n))=((1-t)^2\cdot a+(t-t^2)\cdot(b+c)+t^2\cdot d+(1-t)\cdot e_i+t\cdot(e_j+e_k)-(1+t)\cdot e_n,n).$$
The {left-}quasigroup operation is given by the formula
$$
(a,i)\backslash (b,j)=((1-t^{-1})\cdot a+t^{-1}\cdot (b +e_j-e_i),j).
$$
Hence $F$ is a medial quandle.

We know now that $F$ is a medial quandle and we want to prove its freeness by Proposition~\ref{prop:free}.

We start with analyzing the structure of~$\dis{F}$.
\begin{multline*}
L_{(a,i)}L_{(b,j)}^{-1}((c,k))=(a,i){\ast}((b,j)\backslash(c,k))=(a,i){\ast} ((1-t^{-1})\cdot b+t^{-1}\cdot (c+e_k-e_j),k)=\\
((1-t)\cdot a +(t-1)\cdot b+c+e_k-e_j+e_i-e_k,k)=(c+(1-t)\cdot(a-b)+e_i-e_j,k).
\end{multline*}
In particular, $L_{(0,i)}L_{(0,z)}^{-1}((c,k))=(c+e_i,k)$. Now we prove by induction that
$$\big(L_{(0,i)}L_{(0,z)}^{-1}\big)^{L^n} ((c,j))=(c+t^n\cdot e_i,j), \qquad \text{for each }i\in X^-,j\in X\text{ and }n\in\Z.
$$
The case $n=0$ was already proved. Now suppose $n>0$.
\begin{multline*}
 \big(L_{(0,i)}L_{(0,z)}^{-1}\big)^{L^n} ((c,j))=L_{(0,z)}\big(L_{(0,i)}L_{(0,z)}^{-1}\big)^{L^{n-1}}L_{(0,z)}^{-1}((c,j))
 =\\ L_{(0,z)}\big(L_{(0,i)}L_{(0,z)}^{-1}\big)^{L^{n-1}}{(}(t^{-1}\cdot(c+e_j),j){)}
 =L_{(0,z)}{(}(t^{-1}\cdot(c+e_j)+t^{n-1}\cdot e_i,j){)}=(c+t^n\cdot e_i,j).
\end{multline*}
The case $n<0$ is analogous. Moreover, from this we see that $(L_{(0,i)}L_{(0,z)}^{-1})^f\ {(}(c,j){)}=(c+f\cdot e_i,j)$,
for any $c\in M$, $f\in \Z[t,t^{-1}]$ and $i,j\in X$.

Let $\big((f_i)_{i\in X^-},j\big)$ be in $F$.
We {now} prove that this element lies in the subquandle generated by $\{(0,i);\ i\in X\}$.
But it is not difficult to see that
$$\big((f_i)_{i\in X^-},j\big)=\prod_{i\in X^-} \big(L_{(0,i)}L_{(0,z)}^{-1}\big)^{f_i} \ {(}(0,j){)}.$$
The product is finite since only finitely many $f_i$ are non-zero. Hence $\left< \{(0,i);\ i\in X\}\right>=F$.
Moreover, we see that different generators lie in different orbits.

Since {the set} $\big\{\big(L_{(0,i)}L_{(0,z)}^{-1}\big)^{L^n}{; i\in X, n\in \mathbb{Z}}\big\}$ generates $\dis{F}$, due to Proposition~\ref{prop:generators},
we see that $\dis{F}$ acts freely on every orbit of~$F$. That means also that $\dis{F}$ is isomorphic to $M$
and $\{L_{(0,i)}L_{(0,z)};\ i\in X^-\}$ is clearly its free basis. According to Proposition~\ref{prop:free}, $F$ is free over~$\{(0,i);\ i\in X\}$.
\end{proof}

In~\cite{JPSZ}, the structure of medial quandles was represented using a heterogeneous structure
called the indecomposable affine mesh. We do not recall the definition here as it is not needed,
we just remark that the free medial quandle now constructed is the sum of the affine mesh
$$\Big( (\bigoplus_{x\in X^-} \Z[t,t^{-1}] )_{i\in X};\
(1-t)_{i,j\in X};\ (e_i-e_j)_{i,j\in X} \Big).$$

Recall that subquandles of affine quandles are quasi-affine.
Every (both sided) cancellative medial quandle is quasi-affine -- to see
this we can either use a result by Kearnes~\cite{KK}
for idempotent cancellative algebras having a central
binary operation or a
result by Romanowska and Smith {(see e.g.~\cite{RS})}
for cancellative modes.
Nevertheless,
a direct proof is simple.

\begin{proposition}
 Let $Q$ be a cancellative medial quandle. Then $Q$ embeds into any of its orbits.
\end{proposition}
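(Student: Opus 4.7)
The plan is to use the right translation $R_x\colon Q\to Q$, $y\mapsto y\ast x$, as the desired embedding into the orbit $Qx$.

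First I would observe that $R_x$ is always a quandle homomorphism when $Q$ is medial: applying mediality together with idempotency gives
\[
(y\ast z)\ast x=(y\ast z)\ast(x\ast x)=(y\ast x)\ast(z\ast x),
\]
which is exactly $R_x(y\ast z)=R_x(y)\ast R_x(z)$. Next, the image of $R_x$ lies inside $Qx$: for every $y\in Q$ we have $R_x(y)=L_y(x)\in Qx$ by definition of the orbit. So $R_x$ restricts to a quandle homomorphism $R_x\colon Q\to Qx$.

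Finally I would invoke cancellativity. In a quandle the map $L_x$ is automatically a bijection, so ``cancellative'' adds precisely the right-cancellation property: $y\ast x=y'\ast x$ implies $y=y'$, i.e.\ $R_x$ is injective. Combined with the previous paragraph, this shows $R_x\colon Q\hookrightarrow Qx$ is an embedding, and since $x\in Q$ was arbitrary this works for every orbit.

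There is essentially no obstacle here; the whole point is that mediality plus idempotency promotes every right translation to an endomorphism, and cancellativity is exactly the injectivity one needs. If one wanted to spell out the embedding abstractly, one could alternatively identify $Qx$ with $\orb{Q}{x}\cong \dis{Q}/\dis{Q}_x$ and describe $R_x$ module-theoretically, but for the bare statement the two-line argument above is the cleanest route.
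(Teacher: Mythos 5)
Your proof is correct and follows exactly the paper's route: the paper also uses $R_x$, noting it is an endomorphism (by idempotency and mediality) whose injectivity is precisely right cancellativity, so it embeds $Q$ into $Qx$. Your version merely spells out the computation $(y\ast z)\ast x=(y\ast x)\ast(z\ast x)$ and the observation $R_x(y)=L_y(x)\in Qx$, which the paper leaves implicit.
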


\begin{proof}
 $R_x$ is an endomorphism of~$Q$, for each $x\in Q$.
 The right cancellativity ensures that $R_x$ is injective.
 Hence, for each $x\in Q$, $R_x$ embeds $Q$ into $Qx$.
\end{proof}

The free medial quandle, we have constructed, is cancellative and therefore it can be
represented as a~subquandle of an affine quandle.

{\begin{theorem}\label{th:alter}
 Let $X$ be a set. The free medial quandle over~$X$ is isomorphic to a subquandle of the affine quandle
 $M=\Aff(\bigoplus_{x\in X^-}\Z[t,t^{-1}],t)$.
 \end{theorem}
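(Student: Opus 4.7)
The strategy is to invoke the preceding proposition: since the free medial quandle $F$ is cancellative, it embeds into any of its orbits via the right translation $R_x$. Left cancellativity is automatic in any quandle (each $L_x$ is a bijection). For right cancellativity, I would observe that $(a,i)\ast(c,k)=(b,j)\ast(c,k)$ forces $(1-t)a+e_i=(1-t)b+e_j$ in $M$; reducing modulo $(1-t)M$ isolates the relation $e_i\equiv e_j$, and the vectors $\{e_x;\ x\in X^-\}\cup\{e_z\}=\{e_x;\ x\in X^-\}\cup\{0\}$ remain distinct modulo $(1-t)M$ because $1\notin(1-t)\Z[t,t^{-1}]$, forcing $i=j$; then $1-t$ being a non zero-divisor in $\Z[t,t^{-1}]$ gives $a=b$. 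Hence $R_{(0,z)}$ embeds $F$ into the orbit $F(0,z)$.

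Next I would identify the target orbit as a subquandle of $\Aff(M,t)$. Because $e_z=0$, the set $\{(a,z);\ a\in M\}$ is closed under the quandle operation of Theorem~\ref{th:main}: for any $a,b\in M$,
$$(a,z)\ast(b,z)=((1-t)a+tb,z),$$
which is precisely the operation of $\Aff(M,t)$ transported through the bijection $(a,z)\leftrightarrow a$. Since the orbit $F(0,z)$ is contained in this set, composing $R_{(0,z)}$ with the bijection yields the explicit embedding
$$\Phi:F\longrightarrow\Aff(M,t),\qquad\Phi(a,i)=(1-t)a+e_i,$$
and injectivity of $\Phi$ is inherited from the cancellativity argument above (or follows by the same reduction modulo $(1-t)M$).

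Most of the content is already packaged in Theorem~\ref{th:main} and the proposition on cancellative quandles; the only genuine observation is that the second-coordinate-$z$ slice of $F$ carries the plain affine operation $a\ast b=(1-t)a+tb$. The apparent obstacle, namely that a naive attempt to embed $F$ into $\Aff(M,t)$ by $(a,i)\mapsto a+r_i$ would demand $(1-t)r_i=e_i$ and thus invert $1-t$, is what forces one to premultiply by $1-t$; this is exactly what the endomorphism $R_{(0,z)}$ achieves, so no enlargement of the coefficient ring is required.
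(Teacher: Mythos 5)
Your proof is correct and follows essentially the same route as the paper: both embed the quandle $F$ of Theorem~\ref{th:main} into the slice $M\times\{z\}\cong\Aff(M,t)$ via the right translation $R_{(0,z)}$, i.e.\ via the map $(a,i)\mapsto (1-t)\cdot a+e_i$. The only differences are cosmetic: the paper additionally identifies the image explicitly as $\{a\in M:\ a\equiv e_i\pmod{(1-t)}\ \text{for some }i\in X\}$, which is not required by the statement as written, while you spell out the right-cancellativity check (distinctness of the $e_i$ modulo $(1-t)M$ and $1-t$ being a non-zero-divisor) that the paper leaves implicit.
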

 }

\begin{proof}
{Let $\Lambda:\bigoplus_{x\in X^-}\Z[t,t^{-1}]\to\bigoplus_{x\in X^-}\Z$ be the group
 homomorphism induced by the evaluation homomorphism $\Z[t,t^{-1}]\to\Z${;} $t\mapsto 1$.
 Denote by $Q=\{a\in M;\ \Lambda(a)=e_i,\ \text{for some }i\in X\}$. Then, $Q$ is a subquandle of $M$, since
 $$\Lambda(L_a(b))=\Lambda((1-t)\cdot a+t\cdot b)=0\cdot\Lambda(a)+1\cdot\Lambda(b)=\Lambda(b)$$
 and analogously $\Lambda(L_a^{-1}(b))=\Lambda(b)$.
 We shall prove that $Q$ is a free quandle over $\{e_x;\ x\in X\}$.}

 Take the quandle~$F$ from Theorem~\ref{th:main}. Note that the orbit {$F(0,z)$ is isomorphic to $M$} through the bijection $(a,z)\mapsto a$.
 Now consider the embedding $R_{(0,z)}:F\to F(0,z)$. Clearly $R_{(0,z)}((0,i))=(e_i,0)$.
 Therefore the subquandle of $M$ generated by~$Y=\{e_x;\ x\in X\}$ is free.

 The only thing left to prove is { to show that} $Q=\left<Y\right>$. {Clearly $Y\subset Q$ and $Q$ is a subquandle of $M$, hence $Q\supseteq\left<Y\right>$}. 

 On the other hand, for $Q\subseteq\left<Y\right>$,
 we notice that $Q=\{a\in M;\ a\equiv e_i\pmod{(1-t)},\ \text{for some }i\in X\}$.
 Moreover, we have $(L_{e_x}L_{e_z}^{-1})^{L^n}:Q\rightarrow Q;\;u\mapsto u+(1-t) t^n\cdot e_x$, with
 an analogous proof as in Theorem~\ref{th:main}. Then $(L_{e_x}L_{e_z}^{-1})^{f(L)}(u)=u+(1-t)f\cdot e_x$, for each $f\in\Z[t,t^{-1}]$. Now, for each element $a\in Q$, we have $a=e_i+(1-t) g$, for some
 $i\in X$ and $g=(g_x)_{x\in X^-}\in M$. Hence, we have
 $$a=\prod_{x\in X^-}\big(L_{e_x}L_{e_z}^{-1}\big)^{g_x(L)} (e_i).$$

Therefore $a\in  \left<Y\right>$.
\end{proof}

{\begin{example}
We describe now the free medial quandle on three generators.
Let $X=\{0,1,2\}$, let $e_1=(1,0)$, $e_2=(0,1)$ and $e_0=(0,0)$.
Now $M=\Aff(\Z[t,t^{-1}]\times \Z[t,t^{-1}],t)$ and
$\Lambda:\Z[t,t^{-1}]\times \Z[t,t^{-1}]\to\Z^{2}$ is the group
homomorphism induced by the evaluation homomorphism $\Z[t,t^{-1}]\to\Z${;} $t\mapsto 1$.
Denote by $Q=\{a\in M;\; \Lambda(a)\in\{(0,0),(1,0),(0,1)\}\}$.
By Theorem \ref{th:alter}, $Q$ is a subquandle of $M$ and it is the free quandle generated by the set $\{(0,0),(1,0),(0,1)\}$.
For example, the element $a=(1-t,1+t-t^2)$ lies in~$Q$ since $\Lambda(a)=(0,1)$. Now~$a$
can be represented as $(0,1)+{(1-t)(1,t)}=(0,1)+(1-t)\cdot(1,0)+(1-t)t\cdot(0,1)=(L_{(1,0)}L_{(0,0)}^{-1})(L_{(0,1)}L_{(0,0)}^{-1})^L(0,1)=(L_{e_1}L_{e_0}^{-1})(L_{e_2}L_{e_0}^{-1})^L(e_2).$
\end{example}}

\section{Free quandles in subvarieties}

In this section we study free $n$-symmetric and free $m$-reductive medial quandles.
Both types of varieties have a similar property: they can be characterized by
an identity on $\dis{Q}$.

\begin{de}
 Let $I\subset \Z[t,t^{-1}]$. We say that a medial quandle~$Q$ is an $I$-quandle,
 if ${\alpha^f}=1$, for each $\alpha\in\dis{Q}$ and $f\in I$.
\end{de}

If~$I$ is an ideal of~$\Z[t,t^{-1}]$
and a $\Z[t,t^{-1}]$-module $M$ satisfies the identity $f\cdot a=0$,
for each~$a\in M$ and~$f\in I$,
then $M$ can be viewed as a module over $\Z[t,t^{-1}]/I$.

In our context, the set~$I$ shall usually be a principal ideal,
that means~$I=(f)$, for some $f\in\Z[t,t^{-1}]$.
We then write that $Q$ is an $f$-quandle, rather than $\{f\}$-quandle
or $(f)$-quandle.
If, moreover, $f=\sum_{r=0}^n c_rt^r$ and the coefficient $c_0$ is invertible
then $\Z[t,t^{-1}]/f\cong\Z[t]/f$ since
$t^{-1}\equiv -\big( \sum_{r=1}^n c_rt^{r-1} \big)\cdot c_0^{-1}\pmod{f}$.
We use these remarks since working with the ring
$\Z[t]/f$ is often easier than working with the ring $\Z[t,t^{-1}]$.

We prepared the framework of $I$-quandles to work with symmetric and reductive medial quandles at once.
First note that if $Q$ is an $I$-quandle then clearly $\orb{Q}{x}^I=\{\alpha^f(x)\mid \alpha\in \dis{Q},\; f\in I\}=\{x\}$. On the other hand, let $\alpha^f(x)=x$ for arbitrary
$\alpha\in\dis{Q}$, $f\in I$ and each $x\in Q$. Hence the action of $\alpha^f$ on~$Q$ is trivial and this means $\alpha^f=1$ since
 $\dis{Q}$ is faithful. This immediately gives the following lemma.

\begin{lemma}\label{lem:4.1}
 Let $Q$ be a medial quandle and let  $I\subset \Z[t,t^{-1}]$. Then $Q$ is an $I$-quandle
 if and only if $\orb{Q}{x}^I=\{x\}$, for each $x\in Q$.
\end{lemma}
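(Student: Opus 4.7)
The plan is to unfold the definitions in both directions. The only substantive ingredient is the faithfulness of the action of $\dis{Q}$ on $Q$, which comes for free: by definition $\dis{Q}$ is a subgroup of $\lmlt{Q}$, and $\lmlt{Q}$ is by construction a group of permutations of $Q$.

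First I would dispose of the easy implication. Assuming $Q$ is an $I$-quandle, the definition gives $\alpha^f = 1$ in $\dis{Q}$ for every $\alpha \in \dis{Q}$ and every $f \in I$, so $\alpha^f(x) = x$ for every $x \in Q$. Combined with the identification
$$\orb{Q}{x}^I = \{\alpha^f(x) : \alpha \in \dis{Q},\; f \in I\}$$
recorded in the paragraph preceding the lemma, this yields $\orb{Q}{x}^I = \{x\}$ at once.

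For the converse, I would assume $\orb{Q}{x}^I = \{x\}$ for every $x \in Q$. Fixing arbitrary $\alpha \in \dis{Q}$ and $f \in I$, the hypothesis says that the permutation $\alpha^f$ fixes every element of $Q$ pointwise; faithfulness of the action of $\dis{Q}$ on $Q$ then forces $\alpha^f = 1$ in $\dis{Q}$, and since $\alpha$ and $f$ were arbitrary, $Q$ is an $I$-quandle. I do not anticipate any real obstacle; the lemma is essentially a dictionary between the module-theoretic identity $\alpha^f = 1$ and its orbitwise reformulation, and the discussion just above the statement already supplies both halves of the translation.
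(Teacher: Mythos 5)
Your proof is correct and matches the paper's own argument, which is given in the paragraph immediately preceding the lemma: the forward direction is immediate from the definition of an $I$-quandle, and the converse uses that $\dis{Q}$ is by construction a permutation group on $Q$, so a pointwise-trivial element must equal $1$. Nothing further is needed.
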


Recall that a quandle $Q$ is $n$-symmetric, if $L_x^n=1$, for each $x\in Q$.

\begin{proposition}
 A medial quandle $Q$ is $n$-symmetric if and only if
 $Q$ is a $(\sum_{r=0}^{n-1} t^r)$-quandle.
\end{proposition}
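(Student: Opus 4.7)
The plan is to compute $(L_xL_y^{-1})^{1+t+\cdots+t^{n-1}}$ explicitly and exploit the fact that such elements generate $\dis{Q}$. By Lemma \ref{lem:conjug} the $t$-action on $\dis{Q}$ may be realized as conjugation by any $L_z$; for the particular $\alpha=L_xL_y^{-1}$ it is convenient to conjugate by $L_y$, giving
\[ \alpha^{t^r}\;=\;L_y^r(L_xL_y^{-1})L_y^{-r}\;=\;L_y^rL_xL_y^{-r-1}. \]
Since $\dis{Q}$ is abelian, I may take the product $\prod_{r=0}^{n-1}\alpha^{t^r}$ in the order $r=0,1,\ldots,n-1$; the middle factors then telescope (each $L_y^{-r-1}$ cancels the $L_y^{r+1}$ of the next term), yielding
\[ (L_xL_y^{-1})^{1+t+\cdots+t^{n-1}}\;=\;L_x^nL_y^{-n}. \]

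For the ``only if'' direction, $n$-symmetry gives $L_x^n=L_y^n=1$, so the displayed product equals $1$. The set of $\alpha\in\dis{Q}$ satisfying $\alpha^{1+t+\cdots+t^{n-1}}=1$ is a subgroup (because $\dis{Q}$ is abelian) and contains every $L_xL_y^{-1}$; by Lemma \ref{lem:dis_struct} these elements generate $\dis{Q}$, so the identity holds throughout $\dis{Q}$, proving that $Q$ is a $(1+t+\cdots+t^{n-1})$-quandle.

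For the ``if'' direction, the hypothesis applied to each $L_xL_y^{-1}\in\dis{Q}$ yields $L_x^nL_y^{-n}=1$, so the permutation $\sigma:=L_x^n\in\lmlt{Q}$ does not depend on $x$. Idempotency $x\ast x=x$ gives $L_x(x)=x$, hence $\sigma(x)=L_x^n(x)=x$ for every $x\in Q$; so $\sigma$ fixes every point of $Q$. Since $\lmlt{Q}$ acts faithfully on $Q$ by definition, $\sigma=1$, i.e.\ $L_x^n=1$ for every $x\in Q$, which is exactly $n$-symmetry.

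The only real subtlety is choosing the right conjugator in the $t$-action: conjugating by $L_x$ produces a product that does not telescope cleanly, whereas using $L_y$ (the element in the inverse factor) makes the cancellation immediate and reduces the whole statement to the identity $L_x^nL_y^{-n}=1$, which is transparently equivalent to $n$-symmetry.
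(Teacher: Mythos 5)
Your proof is correct, but it takes a genuinely different route from the paper. The paper imports Proposition 7.2 of the reference \cite{JPSZ}, which characterizes $n$-symmetry by the orbit identity $\sum_{r=0}^{n-1}(1-\varphi)^{r}\alpha(x)=x$ with $\varphi(\alpha(x))=[\alpha,L](x)$, then identifies $1-\varphi$ with the action of $t$ and finishes via Lemma~\ref{lem:4.1}. You instead prove everything from scratch inside the paper's own preliminaries: the telescoping identity $(L_xL_y^{-1})^{1+t+\cdots+t^{n-1}}=L_x^nL_y^{-n}$ (legitimate, since by Lemma~\ref{lem:conjug} you may realize each power of $t$ as conjugation by $L_y$, and the factors all lie in the abelian group $\dis{Q}$ so the chosen ordering is harmless), the fact that $\alpha\mapsto\alpha^{f}$ is an endomorphism of the abelian group $\dis{Q}$ so its kernel is a subgroup containing the generators $L_xL_y^{-1}$ of Lemma~\ref{lem:dis_struct}, and, for the converse, the observation that $L_x^n$ is independent of $x$, fixes every point by idempotency, and hence is the identity permutation because $\lmlt{Q}$ is by definition a permutation group on $Q$. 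Your argument is self-contained and more elementary, reducing the statement to the transparent identity $L_x^n=L_y^n$; the paper's argument buys uniformity, since the same external template (orbit identity plus Lemma~\ref{lem:4.1}) is reused verbatim for the $m$-reductive case in the next proposition, where your telescoping trick would need to be replaced by a different computation.
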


\begin{proof}
 According to \cite[Proposition 7.2]{JPSZ},
 a medial quandle~$Q$ is $n$-symmetric if and only
 if, for each $x\in Q$
 $$\sum_{r=0}^{n-1}(1-\varphi)^{r}{\alpha(x)}=x,$$
where $\varphi:Qx\to Qx$ is defined
 by $\alpha(x)\mapsto [\alpha,L](x)$.

 Then $(1-\varphi)(\alpha(x))=\alpha(x)-[\alpha,L](x)=\alpha[L,\alpha](x)=\alpha^L{(x)}{=\alpha^t(x)}$
 and therefore it means that $\alpha^{\sum_{r=0}^{n-1}t^r}(x)=x$,
 for each $x\in Q$. Hence, according to Lemma~\ref{lem:4.1},
 $Q$ is $n$-symmetric if and only if it is a $(\sum_{r=0}^{n-1} t^r)$-quandle.
\end{proof}

Recall that a quandle $Q$ {is} $m$-reductive, if $R_x^m(y)=x$, for all $x,y\in Q$.

\begin{proposition}
 A medial quandle $Q$ is $m$-reductive
  if and only if it is a $(1-t)^{m-1}$-quandle.
\end{proposition}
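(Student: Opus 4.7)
The plan is to apply Lemma~\ref{lem:4.1}, which reduces the $(1-t)^{m-1}$-quandle condition to $\alpha^{(1-t)^{m-1}}(x)=x$ for every $\alpha\in\dis{Q}$ and every $x\in Q$. The central observation is that, on each orbit, the right translation $R_x$ acts as multiplication by $1-t$ in the orbit module.

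I would first establish the key formula $R_x(\alpha(x))=\alpha^{1-t}(x)$ for $\alpha\in\dis{Q}$ and $x\in Q$. This is a short computation combining idempotency ($L_x(x)=x$), the conjugation identity $L_{\alpha(x)}=\alpha L_x\alpha^{-1}$ valid for any quandle automorphism~$\alpha$, and the rewriting $L_x\alpha^{-1}=\alpha^{-t}L_x$ coming straight from the definition of the $t$-action by conjugation with~$L_x$.

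Next, since $L_y(x)=(L_yL_x^{-1})(x)$ by idempotency, one has $R_x(y)=(L_yL_x^{-1})(x)\in Qx$ for every $y\in Q$, so $R_x$ lands in the orbit of $x$ after a single step. Iterating the previous formula yields, by induction on~$m$,
$$R_x^m(y)=(L_yL_x^{-1})^{(1-t)^{m-1}}(x),$$
and therefore $Q$ is $m$-reductive if and only if $(L_yL_x^{-1})^{(1-t)^{m-1}}(x)=x$ for all $x,y\in Q$.

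For the nontrivial implication, fix $z\in Q$. Lemma~\ref{lem:dis_struct} together with the telescoping $L_aL_b^{-1}=(L_aL_z^{-1})(L_bL_z^{-1})^{-1}$ shows that $\{L_yL_z^{-1}:y\in Q\}$ generates the abelian group $\dis{Q}$. Since the $t$-action is a group automorphism of the abelian $\dis{Q}$, any $\alpha^{(1-t)^{m-1}}$ decomposes as a product of terms $(L_{y_i}L_z^{-1})^{(1-t)^{m-1}}$, each of which fixes $z$ by the previous step; hence $\alpha^{(1-t)^{m-1}}(z)=z$ for every $z\in Q$, and faithfulness of the action forces $\alpha^{(1-t)^{m-1}}=1$. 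Lemma~\ref{lem:4.1} then closes both implications. The one genuine obstacle is the opening formula $R_x(\alpha(x))=\alpha^{1-t}(x)$: one must carefully manage that $L_x$ itself is not in $\dis{Q}$ (only on $\dis{Q}$ does $t$ act) while $\alpha$ is, and exploit idempotency to cancel the trailing $L_x$; everything afterwards is bookkeeping plus the generating-set argument for $\dis{Q}$.
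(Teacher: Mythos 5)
Your proof is correct, but it takes a more self-contained route than the paper. The paper simply invokes \cite[Proposition 6.2]{JPSZ}, which characterizes $m$-reductivity by the condition $\varphi^{m-1}\alpha(x)=x$ for the map $\varphi(\alpha(x))=[\alpha,L](x)$ on each orbit, and then merely observes that $[\alpha,L](x)=\alpha^{1-t}(x)$ before applying Lemma~\ref{lem:4.1}. Your opening formula $R_x(\alpha(x))=\alpha^{1-t}(x)$ is exactly that cited result reproved from scratch, since $\varphi$ is precisely $R_x$ restricted to the orbit $Qx$; what you gain is independence from~\cite{JPSZ}. The price is the extra final step: your iterated formula only yields $\beta^{(1-t)^{m-1}}(x)=x$ for displacements of the special form $\beta=L_yL_x^{-1}$, so you must pass to all of $\dis{Q}$ via a generating set, a step the paper absorbs into the quantifier ``for each $\alpha\in\dis{Q}$'' built into the cited proposition. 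You handle this correctly, though your justification that $\{L_yL_z^{-1};\ y\in Q\}$ generates $\dis{Q}$ is a little quick: the telescoping identity alone only treats words of length two in Lemma~\ref{lem:dis_struct}, and for a general word one needs the conjugation trick of Proposition~\ref{prop:generators} (applied with $X=Q$) together with the observation that $(L_xL_z^{-1})^{L^k}=L_{L_z^k(x)}L_z^{-1}$ is again of the required form. With that small repair the argument is complete, and both proofs close with the same faithfulness argument that underlies Lemma~\ref{lem:4.1}.
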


\begin{proof}
 According to \cite[Proposition 6.2]{JPSZ},
 a medial quandle~$Q$ is $m$-reductive if and only
 if
 $\varphi^{m-1}{\alpha(x)}=x,$
 for each $x\in Q$, where $\varphi:Qx\to Qx$ is defined
 by $\alpha(x)\mapsto [\alpha,L](x)$.

 Then {$\varphi(\alpha(x))=[\alpha,L](x)=\alpha(\alpha^{-1})^L(x)=\alpha^{(1-t)}(x)$}
 and therefore it means that {$\alpha^{(1-t)^{m-1}}(x)=x$},
 for each $x$. Hence, according to Lemma~\ref{lem:4.1},
 $Q$ is {$m$-reductive if and only if it is} a $(1-t)^{m-1}$-quandle.
\end{proof}

Both mentioned polynomials, i.e. $\sum_{r=0}^{n-1} t^r$ and $(1-t)^{m-1}$ have the property
that the leading as well as the absolute coefficients are invertible.

In this case, not only $\dis{Q}$ can be treated as a $\Z[t]/f$-module but it has less generators even as a group.

\begin{proposition}
 Let $f=\sum_{r=0}^s c_rt^r$ be a polynomial with $c_0$ and $c_k$ invertible
 and let~$Q$ be an $f$-quandle generated by $X\subset Q$. Let $z\in X$ {be an arbitrary element}. Then $\dis{Q}$ is generated by
 $\{(L_xL_z^{-1})^{L^{r}}; \ x\in X\smallsetminus\{z\} \text{ and }0\leq r<s\}$.
\end{proposition}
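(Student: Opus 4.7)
The strategy is to start from the generating set provided by Proposition~\ref{prop:generators}, namely $\{(L_xL_z^{-1})^{L^k};\ x\in X\smallsetminus\{z\},\ k\in\Z\}$, and then use the $f$-identity on $\dis{Q}$ to eliminate every exponent $k$ outside the range $0\le r<s$. Throughout I will view $\dis{Q}$ as an abelian group written additively and as a $\Z[t,t^{-1}]$-module via $t\cdot\alpha=\alpha^L$.

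Fix $\alpha=L_xL_z^{-1}$ with $x\in X\smallsetminus\{z\}$. Because $Q$ is an $f$-quandle, we have $\alpha^f=1$, which in additive notation reads
$$\sum_{r=0}^{s} c_r\,\alpha^{L^r}=0.$$
Since the leading coefficient $c_s$ is invertible, solving for the top term gives
$$\alpha^{L^s}=-c_s^{-1}\sum_{r=0}^{s-1} c_r\,\alpha^{L^r},$$
so $\alpha^{L^s}$ lies in the subgroup generated by $\{\alpha^{L^r};\ 0\le r<s\}$. Conjugating by $L^j$ and inducting on $j\ge 0$ puts every $\alpha^{L^{s+j}}$ inside the same subgroup.

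For the negative exponents I would exploit the invertibility of $c_0$ symmetrically. From the same relation,
$$\alpha=-c_0^{-1}\sum_{r=1}^{s} c_r\,\alpha^{L^r},$$
and applying the automorphism $L^{-1}\cdot(-)\cdot L$ (which is legitimate because $L$ acts as an automorphism of the abelian group $\dis{Q}$) yields
$$\alpha^{L^{-1}}=-c_0^{-1}\sum_{r=1}^{s} c_r\,\alpha^{L^{r-1}},$$
expressing $\alpha^{L^{-1}}$ as a combination of $\alpha^{L^0},\ldots,\alpha^{L^{s-1}}$. Downward induction then reduces each $\alpha^{L^{-k}}$ with $k\ge 1$ to the desired generators.

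There is no serious obstacle here: everything reduces to the observation that when both the constant and the leading coefficient of $f$ are invertible, the relation $f\cdot\alpha=0$ can be solved both for its top and its bottom term, which gives the two inductions above. The only technical point worth stating carefully is that conjugation by $L$ is a module automorphism of $\dis{Q}$, so that the relation $\alpha^f=1$ transfers verbatim to any conjugate $\alpha^{L^k}$; once this is noted, the statement follows immediately from Proposition~\ref{prop:generators}.
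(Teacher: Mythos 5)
Your argument is correct and is essentially the paper's own proof: both start from the generating set of Proposition~\ref{prop:generators} and use the relation $\alpha^f=1$, solved for the top term via $c_s^{-1}$ and for the bottom term via $c_0^{-1}$, to reduce all exponents into the range $0\le r<s$. The paper records only the two base identities for $(L_xL_z^{-1})^{L^s}$ and $(L_xL_z^{-1})^{L^{-1}}$ and says ``similarly'' for the rest; your two inductions simply make that step explicit.
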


\begin{proof}
 According to Proposition~\ref{prop:generators}, the group $\dis{Q}$ is
 generated by $(L_xL_z^{-1})^{L^{k}}$, for $x\in X\smallsetminus \{z\}$ and $k\in\Z$.
 But now
 \begin{align*}
  (L_xL_z^{-1})^{L^{-1}}&=(L_xL_z^{-1})^{t^{-1}}=(L_xL_z^{-1})^{-(\sum_{r=0}^{s-1} c_{r+1}t^r)c_0^{-1}}=\prod_{r=0}^{s-1}\nolimits \big((L_xL_z^{-1})^{L^r})^{-c_{r+1}c_0^{-1}},\\
  (L_xL_z^{-1})^{L^s}&=(L_xL_z^{-1})^{t^s}=(L_xL_z^{-1})^{-(\sum_{r=0}^{s-1} c_rt^r)c_s^{-1}}=\prod_{r=0}^{s-1}\nolimits \big((L_xL_z^{-1})^{L^r})^{-c_rc_s^{-1}}.
 \end{align*}
 Similarly for all $(L_xL_z^{-1})^{L^r}$,
 where $r<-1$ or~$r>s$.
\end{proof}

The structure of free medial $f$-quandles can be described exactly in the same way as
the structure of general free medial quandles.

\begin{proposition}\label{prop:free2}
 Let $f\in\Z[t]$ be a polynomial with the leading and the absolute coefficients invertible.
 Let $F$ be an $f$-quandle generated by a set~$X\subset F$. Choose $z\in X$ arbitrarily. Then
 $F$ is a free $f$-quandle over~$X$ if the following conditions are satisfied:
 \begin{enumerate}
  \item each element of~$X$ lies in a different orbit;
  \item $\dis{F}$ is a free $\Z[t]/f$-module with $\{L_xL_z^{-1};\ x\in X\smallsetminus\{z\}\}$
  as a free basis;
  \item the action of $\dis{F}$ on~$F$ is free.
 \end{enumerate}
\end{proposition}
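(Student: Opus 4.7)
The plan is to imitate the proof of Proposition~\ref{prop:free} essentially verbatim, with the only substantive adjustment being that the module structures now live over $\Z[t]/f$ rather than $\Z[t,t^{-1}]$. The reason this substitution is legitimate is the hypothesis on~$f$: since the leading and absolute coefficients of~$f$ are invertible, the remark preceding Proposition~\ref{prop:free2} identifies $\Z[t,t^{-1}]/(f)$ with $\Z[t]/(f)$, and for any $f$-quandle~$Q$ the displacement group $\dis{Q}$ is annihilated by~$f$ and thus becomes a $\Z[t]/f$-module in a canonical way.

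First I would establish that every $y\in F$ has a unique representation $y=\alpha(x)$ with $\alpha\in\dis{F}$ and $x\in X$: existence of~$x$ is Lemma~\ref{lem:orbits}, uniqueness of~$x$ is condition~(1), and uniqueness of~$\alpha$ is condition~(3) (the action is free).

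Next, given an $f$-quandle~$Q$ and a mapping $\psi\colon X\to Q$, I would construct a $\Z[t]/f$-module homomorphism $\Phi\colon\dis{F}\to\dis{Q}$ by specifying its values on the free basis of $\dis{F}$ supplied by condition~(2), namely $\Phi(L_xL_z^{-1})=L_{\psi(x)}L_{\psi(z)}^{-1}$ for $x\in X\smallsetminus\{z\}$. Freeness of the basis guarantees that $\Phi$ extends uniquely to a $\Z[t]/f$-linear map, and as in the proof of Proposition~\ref{prop:free} one then has $\Phi(\alpha^L)=\Phi(\alpha^t)=\Phi(\alpha)^t=\Phi(\alpha)^L$. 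I would then set
\[
\Psi(\alpha(x))=\Phi(\alpha)(\psi(x)),\qquad\alpha\in\dis{F},\ x\in X,
\]
which is well defined by the first step.

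Verification that $\Psi$ is a quandle homomorphism extending $\psi$ is then the exact same chain of identities that appears in Proposition~\ref{prop:free}, since it only uses the abelianness of $\dis{Q}$, the identity $L_{\alpha(x)}=L_x^\alpha$, and the fact that $\Phi$ commutes with conjugation by~$L$. The main (and really only) conceptual obstacle is the justification that condition~(2) is meaningful for this proof, i.e.\ that the prescription on basis elements actually determines a module map into $\dis{Q}$; this is precisely where the invertibility of the extreme coefficients of~$f$ is used, via the ring isomorphism $\Z[t,t^{-1}]/(f)\cong\Z[t]/(f)$ recorded earlier, which converts the a~priori $\Z[t,t^{-1}]$-module structure on $\dis{Q}$ into a $\Z[t]/f$-module structure compatible with the $\Z[t]/f$-basis chosen in $\dis{F}$.
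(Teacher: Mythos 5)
Your proposal is correct and follows exactly the route the paper takes: the paper's own proof simply says it is "nearly the same as the proof of Proposition~\ref{prop:free}," with the displacement groups now being $\Z[t]/f$-modules. Your additional justification that the $f$-quandle hypothesis together with the invertibility of the extreme coefficients makes $\dis{Q}$ a $\Z[t]/f$-module (via $\Z[t,t^{-1}]/(f)\cong\Z[t]/(f)$) is precisely the point the paper leaves implicit, and it is handled correctly.
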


\begin{proof}
 The proof is nearly the same as the proof of Proposition~\ref{prop:free}.
 The only difference is that displacements groups appearing here are $\Z[t]/f$-modules.
\end{proof}

\begin{theorem}\label{th:main2}
 Let $f\in\Z[t]$ be a polynomial with the leading and the absolute coefficients invertible.
 Let $X$ be a set.
 Let $M=\bigoplus_{{s}\in X^-}\Z[t]/f$. Let us denote by $F$ the set $M\times X$ equipped
 with the operation
 $$(a,i){\ast} (b,j)=((1-t)\cdot a+t\cdot b+e_i-e_j,j).$$
 Then $(F,{\ast})$ is a free
 $f$-quandle over $\{(0,i);\ i\in X\}$.
\end{theorem}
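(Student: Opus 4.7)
The plan is to mimic the proof of Theorem~\ref{th:main} essentially verbatim, with $\Z[t,t^{-1}]$ replaced throughout by $\Z[t]/f$ and Proposition~\ref{prop:free} replaced by Proposition~\ref{prop:free2}. The first observation to secure is that the operation $\ast$ is well defined on $F$: since $f$ has invertible absolute coefficient, the class of $t$ is a unit in $\Z[t]/f$ (with inverse given by the expression $-\big(\sum_{r=1}^{s} c_r t^{r-1}\big) c_0^{-1}$ as in the discussion preceding Proposition~\ref{prop:free2}), so $1-t$, $t$, and $t^{-1}$ all make sense as scalars acting on $M$.

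I would then verify that $(F,\ast)$ is a medial quandle in the same way as in Theorem~\ref{th:main}: idempotency is immediate, the quasigroup inverse is given by the same closed-form formula $(a,i)\backslash(b,j)=((1-t^{-1})\cdot a+t^{-1}\cdot(b+e_j-e_i),j)$, and mediality follows from the explicit expansion of $((a,i)\ast(b,j))\ast((c,k)\ast(d,n))$, whose symmetric dependence on $b,c$ and on $i,j$ (resp.\ $j,k$) is identical to the computation already carried out. Moreover $F$ is an $f$-quandle because the coefficients of the action of $\dis{F}$ live in $\Z[t]/f$, so every element of $\dis{F}$ is killed by $f$; formally this will drop out of the module description in the next step.

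Next I would compute, step by step, the displacement operators. Exactly as before,
$$L_{(a,i)}L_{(b,j)}^{-1}((c,k))=(c+(1-t)\cdot(a-b)+e_i-e_j,k),$$
so in particular $L_{(0,i)}L_{(0,z)}^{-1}((c,k))=(c+e_i,k)$. Then by induction on $n$ (both signs), $\big(L_{(0,i)}L_{(0,z)}^{-1}\big)^{L^n}((c,j))=(c+t^n\cdot e_i,j)$, and by linearity $\big(L_{(0,i)}L_{(0,z)}^{-1}\big)^{g(L)}((c,j))=(c+g\cdot e_i,j)$ for every $g\in\Z[t]/f$. From this three facts follow simultaneously: the element $\big((f_i)_{i\in X^-},j\big)$ is reached from $(0,j)$ by a finite product of such operators, so $F$ is generated by $\{(0,i);\ i\in X\}$ and distinct generators lie in distinct orbits (since $\ast$ preserves the second coordinate); $\dis{F}$ acts freely on $F$ (the element $\big(L_{(0,i)}L_{(0,z)}^{-1}\big)^{g(L)}$ fixes $(c,j)$ only when $g\cdot e_i=0$ in $M$); and $\dis{F}$ is isomorphic to $M=\bigoplus_{i\in X^-}\Z[t]/f$ with free basis $\{L_{(0,i)}L_{(0,z)}^{-1};\ i\in X^-\}$ as a $\Z[t]/f$-module.

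With these three conditions in hand, Proposition~\ref{prop:free2} applies and yields freeness of $F$ as an $f$-quandle. The only point requiring care — and this is where the proof genuinely uses the hypothesis on $f$ rather than just quoting Theorem~\ref{th:main} — is the invertibility of $t$ needed both to define $\backslash$ and to invoke the generating result of Proposition~\ref{prop:generators} inside the ring $\Z[t]/f$. Beyond that, no new obstacle appears: the arithmetic identities used in Theorem~\ref{th:main} are polynomial identities over $\Z[t,t^{-1}]$, hence remain valid after reduction modulo $f$.
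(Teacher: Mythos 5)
Your proposal is correct and follows exactly the route the paper takes: the paper's own proof simply says the argument is ``nearly the same as Theorem~\ref{th:main}, with the usage of Proposition~\ref{prop:free2}'', and that the $f$-quandle identity $\alpha^f=1$ follows from $\dis{F}\cong M$. You have merely written out the details the paper leaves implicit, correctly isolating the one point where the hypothesis on $f$ is genuinely used (invertibility of $t$ in $\Z[t]/f$, coming from the invertible absolute coefficient).
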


\begin{proof}
 The proof is nearly the same as of Theorem~\ref{th:main}, with the usage of Proposition~\ref{prop:free2}.
 The only thing {to show} is that ${\alpha^f}=1$, for all $\alpha\in\dis{F}$.
 But this follows from $\dis{F}\cong M$.
\end{proof}

Non-trivial reductive medial quandles are never right-cancellative since the multiplication
by $(1-t)$ is not injective.
On the other hand, free $n$-symmetric quandles
are cancellative and we can embed them in their orbits.
Moreover, the polynomial $\sum t^r$ is a product of cyclotomic polynomials.

\begin{theorem}\label{th:alter2}
 Let $X$ be a set.
 Let $n\in\N$ and let $\sum_{r=0}^{n-1} t^r=\prod_{j\in\mathcal{J}} f_j$,
 where $f_j$ are irreducible in~$\Z[t]$ and $M_j=\mathrm{Aff}(\bigoplus_{x\in X^-} \Z[t]/f_j,t)$, for each $j\in\mathcal{J}$ .
 Each free $n$-symmetric medial quandle is isomorphic to the subquandle of $\prod_{j\in\mathcal{J}} M_j$
 generated by $\{(e_i)_{j\in\mathcal{J}};\ i\in X\}$.
 \end{theorem}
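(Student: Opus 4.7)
The plan is to deduce Theorem \ref{th:alter2} from Theorem \ref{th:main2} by combining two ingredients: first, realizing the free $n$-symmetric medial quandle as a subquandle of an affine quandle in the spirit of Theorem \ref{th:alter}, and second, a Chinese-Remainder-style embedding of the coefficient ring arising from the factorization $f = \prod_{j \in \mathcal{J}} f_j$, where $f = \sum_{r=0}^{n-1} t^r$.

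First I would apply Theorem \ref{th:main2} to realize the free $n$-symmetric medial quandle $F$ on $X$ concretely as $M \times X$ with $M = \bigoplus_{x \in X^-} \Z[t]/f$. Mimicking the proof of Theorem \ref{th:alter}, the orbit $F(0,z)$ identifies with the affine quandle $\Aff(M, t)$ via $(a,z) \mapsto a$. Because multiplication by $1-t$ is injective on $\Z[t]/f$ -- which follows from $\gcd(1-t, f) = 1$ in $\mathbb{Q}[t]$ (since $f(1) = n \ne 0$) together with $f$ being monic so that no denominators enter -- the endomorphism $R_{(0,z)}$ is injective. This embeds $F$ into $\Aff(M, t)$ and sends each generator $(0,i)$ to $e_i$.

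Next I would construct an injective $\Z[t]$-module map $\Z[t]/f \hookrightarrow \prod_{j \in \mathcal{J}} \Z[t]/f_j$ given componentwise by reduction modulo $f_j$. Since $\Z[t]$ is a unique factorization domain and the $f_j$ are pairwise non-associate monic irreducibles (in fact the cyclotomic polynomials $\Phi_d$ for $d \mid n$, $d > 1$), one has $\bigcap_j (f_j) = (\prod_j f_j) = (f)$, yielding injectivity. Taking the direct sum over $X^-$ of this map, and noting that the finite product over $\mathcal{J}$ commutes with direct sums, produces an injective $\Z[t]$-module homomorphism $M \hookrightarrow \prod_j \bigoplus_{x \in X^-} \Z[t]/f_j$. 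This map is $t$-equivariant and therefore induces a quandle embedding $\Aff(M, t) \hookrightarrow \prod_j M_j$ sending $e_i$ to $(e_i)_{j \in \mathcal{J}}$.

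Composing the two embeddings realizes $F$ as a subquandle of $\prod_j M_j$, necessarily generated by the images of the free generators, which are precisely $\{(e_i)_{j \in \mathcal{J}};\ i \in X\}$. The main obstacle is justifying this Chinese-Remainder-style injection over $\Z[t]$ rather than over $\mathbb{Q}[t]$: surjectivity can fail in general (for example $(\Phi_p, \Phi_{p^2})$ is a proper ideal of $\Z[t]$, since it contains $p$), but injectivity survives thanks to the UFD structure of $\Z[t]$ together with the monicity of $f$.
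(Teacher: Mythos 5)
Your argument is correct and follows the same overall route as the paper: realize the free $n$-symmetric medial quandle via Theorem~\ref{th:main2}, embed it into $\Aff(\bigoplus_{x\in X^-}\Z[t]/f,\,t)$ by the right translation $R_{(0,z)}$ exactly as in Theorem~\ref{th:alter}, and then pass to $\prod_{j}M_j$ through the factorization $f=\prod_j f_j$. The one genuine difference is at the last step, and it is in your favour: the paper invokes the Chinese remainder theorem to assert a ring isomorphism $\Z[t]/f\cong\bigoplus_j\Z[t]/f_j$, but the ideals $(f_i)$ and $(f_j)$ need not be comaximal in $\Z[t]$ --- your observation that $p\in(\Phi_p,\Phi_{p^2})\subsetneq\Z[t]$ is exactly the obstruction, and already for $n=4$ the map $\Z[t]/(1+t+t^2+t^3)\to\Z\times\Z[i]$ fails to be onto (the idempotent $(1,0)$ has no preimage). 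Only the injectivity of the diagonal map survives, via $\bigcap_j(f_j)=(\prod_j f_j)=(f)$ in the UFD $\Z[t]$, and, as you correctly note, injectivity is all the statement requires, since it speaks only of the subquandle \emph{generated} by $\{(e_i)_{j\in\mathcal{J}};\ i\in X\}$; a side effect is that the congruence-defined set $Q$ written at the start of the paper's proof is in general strictly larger than that generated subquandle, so your more conservative formulation is actually the one that matches the theorem. Two points you pass over quickly are easily supplied: injectivity of $R_{(0,z)}$ needs, besides injectivity of multiplication by $1-t$, that $e_i\not\equiv e_j\pmod{(1-t)M}$ for $i\neq j$, which follows by evaluating at $t=1$ (the quotient is $\bigoplus_{x\in X^-}\Z/n$); and $\mathcal{J}$ is finite, so the product does commute with the direct sum as you claim.
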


\begin{proof}

We show that the subquandle $Q=\{(a_{j})_{j\in \mathcal{J}};\ \exists i\in X\ \forall j\in \mathcal{J}\ a_{j}\equiv e_i\pmod{(1-t)}\}$
 of $\prod_{j\in\mathcal{J}} M_j$ is a free $n$-symmetric medial quandle
 over $\{(e_i,e_i,\ldots,e_i);\ i\in X\}$.

 Denote by $f=\sum_{r=0}^{n-1} t^r$.
 It is well known that $t^n-1={(t-1) f=(t-1) \prod_{j\in\mathcal{J}} f_j}$
 and all the polynomials $f_j$ are pairwise different.
 Therefore, according to the Chinese remainder theorem, $\Z[t]/f\cong \bigoplus_{j\in\mathcal{J}} \Z[t]/f_j$.
 The rest is the same as in Theorem \ref{th:alter}.
\end{proof}

\begin{example}
 Consider $n=2$, i.e. involutory medial quandles. Since
 $\Z[t]/(t+1)\cong \Z$ and $t\equiv -1\pmod {(t+1)}$,
 according to Theorem~\ref{th:alter2},
 the free $|X|$-generated involutory medial quandle is isomorphic to the subquandle
 of $\Aff(\bigoplus_{{x}\in X^-} \Z,-1)$ that consists of those $|X^-|$-tuples
 congruent to some $e_i$ modulo~$2$. This confirms the result of Joyce (Theorem~\ref{Th:Joyce}).
\end{example}

Theorem~\ref{th:alter2} can be reformulated as follows: let $\zeta_k$ be the primitive $k$-th root
of unity in~$\mathbb{C}$. It is well known that $\Z[t]/(1+t+\cdots+t^{n-1})\cong \prod_{k|n,k>1} \Z[\zeta_k]$.
Hence the free $|X|$-generated $n$-symmetric quandle is the subquandle
of $\prod_{k|n,k>1} \Aff(\bigoplus_{{x}\in X^-} \Z[\zeta_k],\zeta_k)$ generated by $\{(e_i,e_i,\ldots,e_i);\ i\in X\}$.

\begin{example}
 Consider $|X|=2$. Then the free $2$-generated $n$-symmetric medial quandle {$F$} is
 the subquandle of $\prod_{k|n,k>1} \Aff(\Z[\zeta_k],\zeta_k)$ generated by $(0,\ldots,0)$
 and $(1,\ldots,1)$.
 The subquandle $F$ consists of those tuples $(a_k)_{k|n, k>1}$,
that either $a_k\equiv 0\pmod {(1-\zeta_k)}$,
 for all $k|n, k>1$,  or $a_k\equiv 1\pmod {(1-\zeta_k)}$,
 for all $k|n, k>1$.
\end{example}

Every finite quandle is $n$-symmetric, for some~$n$. For studying
finite medial quandles, it is nice to hear that we do not need always consider
$\Z[t,t^{-1}]$-modules but we can sometimes restrain or focus on nicer rings, or
even domains.

\begin{corollary}
 Let $n\in\N$. The variety of $n$-symmetric medial quandles is generated by quandles that
 are polynomially equivalent to modules over Dedekind domains.
\end{corollary}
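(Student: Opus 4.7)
The plan is to combine Theorem~\ref{th:alter2} with two standard facts from universal algebra: every variety is generated by its free algebras, and the $HSP$-closure of a class of algebras is closed under subalgebras and products.

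First I would identify the relevant Dedekind domains. For each $k>1$, the ring $\Z[\zeta_k]$ is the ring of integers of the cyclotomic field $\mathbb{Q}(\zeta_k)$, and is therefore a Dedekind domain. Consequently, for every set~$X$ and every divisor $k\mid n$ with $k>1$, the factor
$$M_k=\Aff\Big(\bigoplus_{x\in X^-}\Z[\zeta_k],\,\zeta_k\Big)$$
appearing in Theorem~\ref{th:alter2} is an affine quandle built on a free module over a Dedekind domain. Since $\zeta_k\in\Z[\zeta_k]$, the quandle term $(1-\zeta_k)\cdot a+\zeta_k\cdot b$ is literally a polynomial of the $\Z[\zeta_k]$-module structure on $\bigoplus_{x\in X^-}\Z[\zeta_k]$; conversely, the module operations can be recovered as polynomials of the quandle by fixing a constant, so $M_k$ is polynomially equivalent to this $\Z[\zeta_k]$-module.

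Second, I would let $\mathcal{K}$ denote the class of all such quandles $M_k$, as $X$ ranges over sets and $k$ over divisors of~$n$ greater than~$1$, and let $\mathcal{V}(\mathcal{K})$ be the variety generated by~$\mathcal{K}$. By Theorem~\ref{th:alter2}, each free $n$-symmetric medial quandle embeds into a finite product of members of~$\mathcal{K}$, hence every free algebra of the variety of $n$-symmetric medial quandles lies in~$\mathcal{V}(\mathcal{K})$. Since a variety is generated by its free algebras, the variety of $n$-symmetric medial quandles is contained in~$\mathcal{V}(\mathcal{K})$; the reverse inclusion is immediate because every $M_k$ is itself an $n$-symmetric medial quandle. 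Thus the two varieties coincide, and the members of~$\mathcal{K}$ are polynomially equivalent to modules over the Dedekind domains $\Z[\zeta_k]$, which is the desired conclusion.

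The only step requiring real care is the polynomial equivalence between $M_k$ and its underlying $\Z[\zeta_k]$-module. The forward direction is trivial from the definition of affine quandle; the reverse follows from the standard fact that an affine algebra in the universal-algebraic sense is polynomially equivalent to its coefficient module, once one observes that scalar multiplication by $\zeta_k$ (and hence by every element of $\Z[\zeta_k]=\Z[\zeta_k]$ generated over $\Z$ by $\zeta_k$) can be expressed through the term $(1-\zeta_k)\cdot a+\zeta_k\cdot b$ with suitable constants. This is routine and presents no genuine obstacle.
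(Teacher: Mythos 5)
Your overall skeleton is fine and matches the paper's: every variety is generated by its free algebras, Theorem~\ref{th:alter2} embeds the free $n$-symmetric medial quandles into finite products of the quandles $M_k=\Aff(\bigoplus_{x\in X^-}\Z[\zeta_k],\zeta_k)$, and $\Z[\zeta_k]$ is a Dedekind domain. The gap is in the step you yourself single out as ``the only step requiring real care'': the claim that $M_k$ is \emph{polynomially equivalent} to its underlying $\Z[\zeta_k]$-module is false in general. The quandle operations are polynomials of the module (so $M_k$ is quasi-affine), but the module operations are not recoverable as quandle polynomials: addition is the problem, not scalar multiplication. Concretely, take $k=p$ prime, so that $1-\zeta_p$ is a non-unit (it generates the prime ideal above $p$). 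Reducing modulo $(1-\zeta_p)$ gives a quandle congruence on $M_p$ whose quotient is the right-projection algebra on $(\Z[\zeta_p]/(1-\zeta_p))^{(X^-)}\cong\mathbb{F}_p^{(X^-)}$, since $(1-\zeta_p)\cdot a+\zeta_p\cdot b\equiv b$ and likewise for $\backslash$. Every polynomial of a projection algebra is a projection or a constant, so if $x+y$ were a quandle polynomial of $M_p$ it would induce $x+y$ on a nontrivial $\mathbb{F}_p$-space as a projection or constant, which is impossible. Since every $n\ge 2$ has a prime divisor, at least one factor $M_k$ in Theorem~\ref{th:alter2} always suffers from this, so your generating class $\mathcal{K}$ does not consist of affine (i.e.\ polynomially module-equivalent) algebras, and the corollary as stated does not follow from your argument. (Your appeal to ``an affine algebra is polynomially equivalent to its coefficient module'' is circular: that is the definition of affine, and the issue is precisely whether $M_k$ is affine rather than merely quasi-affine.)

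The paper closes this gap differently: it uses the earlier proposition that a cancellative medial quandle embeds via the right translation $R_x$ into any of its orbits, and the fact (from \cite[Proposition 3.2]{JPSZ}) that each orbit is affine over the orbit module $\dis{Q}/\dis{Q}_x$, which here is a $\Z[\zeta_k]$-module. So the generators of the variety are taken to be these orbit quandles carried by modules over the Dedekind domains $\Z[\zeta_k]$, not the ambient quandles $M_k$ themselves. If you want to repair your write-up along your own lines, you must either pass to the orbits as the paper does, or replace the claim of polynomial equivalence by the correct statement that $M_k$ is a quandle defined on (a subreduct of) a module over a Dedekind domain.
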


\begin{proof}
 It is well known \cite{ZS75} that $\Z[\zeta_k]$ is a Dedekind domain, for each~$k$.
 Free $n$-symmetric quandles embed into products of $\Aff(\bigoplus{_{x\in X^-}} \Z[\zeta_k],\zeta_k)$.
 Each of these affine quandles embeds into any of its orbits, i.e. into a module over $\Z[\zeta_k]$.
\end{proof}

Note that applying our idea of $I$-quandles one obtains the description of free $n$-symmetric $m$-reductive medial quandles if
we consider $I=\{\sum_{r=0}^{n-1} t^r,(1-t)^{m-1}\}$. In particular, the free \mbox{2-reductive} $n$-symmetric
medial quandle over X is isomorphic to $\bigoplus_{{x}\in X^-}\Z_n\times X$
with the operation $(a,i)*(b,j)=(b+e_i-e_j,j)$
\cite[Proposition 2.4]{RR89}.

\end{document}